\documentclass[10pt]{amsart}
\addtolength\headheight{4pt}
\usepackage{amssymb,mathrsfs}
\usepackage{mathabx}
\usepackage{color}
\usepackage{graphicx}
\usepackage{float}
\usepackage[bottom]{footmisc}
\usepackage[colorlinks,hyperindex,linkcolor=blue]{hyperref}
\hypersetup{
 pdfauthor = {Roger Casals, Alvaro del Pino Gomez, Francisco Presas},
 pdftitle= {Loose Engel structures},
 pdfsubject = {Differential geometry, h--principle, distributions, Engel structures}}

\setlength\parindent{0pt}
\setlength{\oddsidemargin}{5pt} \setlength{\evensidemargin}{5pt}
\setlength{\textwidth}{440pt}
\setlength{\topmargin}{-50pt}
\setlength{\textheight}{24cm}
\setlength{\parskip}{3mm plus0.4mm minus0.4mm}

\newcommand{\SE}{{\mathcal{E}}}
\newcommand{\SD}{{\mathcal{D}}}
\newcommand{\SW}{{\mathcal{W}}}
\newcommand{\SY}{{\mathscr{Y}}}

\newcommand{\SX}{{\mathfrak{X}}}
\newcommand{\ST}{{\mathcal{T}}}
\newcommand{\SU}{{\mathcal{U}}}
\newcommand{\SV}{{\mathcal{V}}}

\newcommand{\Engel}{\operatorname{\mathscr{E}}}
\newcommand{\FEngel}{{\operatorname{\mathscr{E}^f}}}
\newcommand{\OT}{{\operatorname{OT}}}
\newcommand{\Loose}{{\operatorname{\mathscr{L}}}}
\newcommand{\Tight}{{\operatorname{Tight}}}

\newcommand{\SL}{{\mathcal{L}}}
\newcommand{\Imm}{{\operatorname{\mathcal{I}}}}
\newcommand{\FImm}{{\operatorname{\mathcal{I}^f}}}

\newcommand{\R}{{\mathbb{R}}}
\newcommand{\Z}{{\mathbb{Z}}}
\newcommand{\NS}{{\mathbb{S}}}
\newcommand{\D}{{\mathbb{D}}}
\newcommand{\N}{\mathbb{N}}
\newcommand{\PP}{\mathbb{P}}

\newcommand{\Cont}{\operatorname{\mathscr{C}}}
\newcommand{\FCont}{\operatorname{\mathscr{C}^f}}

\newcommand{\Op}{{\mathcal{O}p}}

\newcommand{\no}{{\operatorname{n.o.}}}
\newcommand{\Mon}{{\operatorname{Mon}}}

\newcommand{\acts}{{\righttoleftarrow}}

\newtheorem{lemma}{Lemma}
\newtheorem{proposition}[lemma]{Proposition}
\newtheorem{theorem}[lemma]{Theorem}
\newtheorem{corollary}[lemma]{Corollary}
\newtheorem{definition}[lemma]{Definition}

\theoremstyle{remark}
\newtheorem{remark}[lemma]{Remark}

\begin{document} 

\title{Loose Engel structures}

\subjclass[2010]{Primary: 58A17, 58A30.}
\date{\today}

\keywords{Engel structures, $h$--principle, flexibility}

\author{Roger Casals}
\address{University of California Davis, Department of Mathematics, Shields Avenue, Davis, CA 95616, USA}
\email{casals@math.ucdavis.edu}

\author{\'Alvaro del Pino}
\address{Utrecht University, Department of Mathematics, Budapestlaan 6, 3584 Utrecht, The Netherlands}
\email{a.delpinogomez@uu.nl}

\author{Francisco Presas}
\address{Instituto de Ciencias Matem\'aticas -- CSIC. C. Nicol\'as Cabrera, 13--15, 28049 Madrid, Spain.}
\email{fpresas@icmat.es}

\begin{abstract}
This article contributes to the study of Engel structures and their classification. The main result introduces the notion of a loose family of Engel structures and shows that two such families are Engel homotopic if and only if they are formally homotopic. This implies a complete $h$--principle when auxiliary data is fixed. As a corollary, we show that Lorentz and orientable Cartan prolongations are classified up to homotopy by their formal data. 
\end{abstract}

\maketitle

\section{Introduction}

Let $M$ be a smooth $n$-dimensional manifold. By definition, an $m$-dimensional smooth distribution $E\subset TM$ is a smooth section of the Grassmannian bundle $\mbox{Gr}_m(TM)\longrightarrow M$. Distributions are a core geometric structure in the modern perspective of differential geometry and control theory \cite{BH93,GV,Mon2,MZ09}, which in particular subsumes the smooth dynamics of non-vanishing vector fields, the theory of foliations, and contact geometry. The integrability of geometric structures, including the existence of complex structures, and the symmetries (and solvability) of differential equations are part of the theory of distributions \cite{Cartan1901,Car,GV}.

\'E.~Cartan addressed in \cite{Cartan1901} the existence of a local normal form for a generic distribution, i.e.~the lack of local invariants when the distribution is given by an open condition. The main result \cite{Cartan1901,Mon,Mon3} is that a generic distribution $E\subset TM$ has a unique local normal form if and only if it belongs to one of the following families: smooth line fields, contact structures (even or odd), or Engel structures. The study of the first two geometries, smooth dynamics and contact topology, have been topics of major interest and activity in the last four decades. 

Engel structures, maximally non-integrable 2-distributions in 4-manifolds, have proven themselves more elusive: in \cite[Intrigue F]{EM}, Y. Eliashberg and E. Mishachev identified the classification of Engel structures as an outstanding problem in the theory of $h$-principles. The first modern breakthrough in Engel geometry was the existence theorem proven by T.~Vogel \cite[Theorem 6.1]{Vo0}. In the last two years, the study of Engel structures has further seen significant developments \cite{CP,CPPP,CPV,KV18,Mitsumatsu18,Pia,dP17,PV,Yamazaki,Zhao18,ZhaoThesis18}, exhibiting unique properties of Engel structures and connections with smooth dynamics and contact and symplectic geometry. In particular, the authors proved the parametric existence $h$-principle for Engel structures in \cite{CPPP}, and the first two authors developed the complete $h$-principle for non-singular Engel knots in \cite{CP}.

The present article continues this work by providing a classification $h$-principle for a class of Engel structures called {\it loose} (see Section \ref{ssec:mainDef} for a definition). Our result should be compared (see the Appendix) to the work in \cite{PV}, which defines and classifies another class of Engel structures, called \emph{overtwisted}. The later is closer in behaviour to the class of {\it overtwisted} contact structures, which were shown to satisfy a complete $h$-principle in \cite{BEM}.

Let us now state the main theorems of this article in precise terms.

\subsection{Main Results}

Let $M$ be a closed smooth $4$-manifold, $\Engel(M)$ the space of Engel structures on $M$, and $\FEngel(M)$ its formal counterpart \cite{CPPP,Gr86,PV} (see Subsection \ref{ssec:EngelStructures} for a definition). It was proven in \cite{CPPP} that the scanning map given by the inclusion
$$ \Engel(M) \longrightarrow \FEngel(M) $$
induces a surjection in homotopy groups. 

The aim of the present work is to show that every homotopy class in $\pi_k(\FEngel(M))$ can be represented by a $k$--dimensional sphere in $\Engel(M)$ which is unique up to Engel homotopy, i.e.~there is a subgroup $\Loose_k(M) \subset \pi_k(\Engel(M))$ isomorphic to $\pi_k(\FEngel(M))$ that can be characterised in a geometric fashion. This is the content of our two main results:

\begin{theorem} \label{thm:main1}
Let $M$ be a smooth $4$-manifold, $K$ a compact CW--complex, and $N$ a positive integer. Then, any family $\SD: K \longrightarrow \FEngel(M)$ is formally homotopic to an $N$--loose family.
\end{theorem}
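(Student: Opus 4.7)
The strategy is to combine the parametric existence $h$--principle from \cite{CPPP} with a controlled insertion of $N$ loose charts. I first choose $N$ pairwise disjoint closed balls $B_1,\ldots,B_N \subset M$, together with a standard loose Engel germ $\SD_{\text{std}}$ defined on an open neighbourhood of each $B_i$. Since the space of formal Engel structures on a ball is weakly contractible (any two formal Engel structures on a contractible region can be connected through a path of such), I may deform $\SD$ through a formal homotopy, parametrically over $K$, to a new family $\widetilde{\SD}: K \to \FEngel(M)$ that coincides with $\SD_{\text{std}}$ on a neighbourhood of $B_1 \cup \cdots \cup B_N$ for every $k \in K$. This parametric prescription of germs is a standard consequence of the homotopical flexibility of formal Engel data on a disc.

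Next, I apply a relative and parametric version of the $h$--principle of \cite{CPPP} to $\widetilde{\SD}$, taking $\bigcup_i B_i$ as the region to be left fixed. This produces a formal homotopy, relative to a neighbourhood of $\bigcup_i B_i$, from $\widetilde{\SD}$ to a genuine family $\SD': K \to \Engel(M)$. By construction, each $\SD'(k)$ contains the $N$ disjoint loose charts $\SD_{\text{std}}|_{B_i}$, varying continuously with $k$, so $\SD'$ is an $N$--loose family. Concatenating this formal homotopy with the one of the previous step exhibits $\SD$ and $\SD'$ as formally homotopic, as desired.

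The main obstacle lies in establishing the relative, parametric form of the $h$--principle used in the second step. The statement in \cite{CPPP} that the scanning map is a surjection on homotopy groups gives the existence of a genuine Engel family in the same formal class, but not \emph{a priori} one that preserves a prescribed open region of pre-existing genuine Engel data, nor one that works parametrically over a general CW--complex $K$. Establishing these refinements requires re-examining the constructive arguments of \cite{CPPP} to verify that the local modifications used there can be performed away from $\bigcup_i B_i$, depend continuously on the parameter $k \in K$, and can be assembled consistently across the cells of $K$. This packaging of the existence $h$--principle in a relative parametric form is the essential technical content of the theorem.
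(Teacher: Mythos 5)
There is a genuine gap, and it comes from treating $N$--looseness as if it were a local property of the type ``$N$ disjoint copies of a standard model sit inside $M$''. That is not what Definition \ref{def:loose} asks for. An $N$--loose family comes with a \emph{certificate}: a family of line fields $\SY(k) \subset \SD(k)$ such that (i) \emph{every} development curve $\gamma_{\SY(k)}(\SD(k))_p$, for every $p \in M$ and $k \in K$, is convex, and (ii) every such curve contains an embedded segment carrying $N$ wiggles. This is a global condition along all orbits of $\SY$. Your construction never produces a certificate, and the Engel family $\SD'$ obtained by applying the $h$--principle of \cite{CPPP} away from the balls has no reason to have convex development curves with respect to any line field: the construction in \cite{CPPP} mixes conditions (A) and (B) of Proposition \ref{prop:EngelCharacterisation}, and even upgrading those specific families to loose ones requires the additional argument of Subsection \ref{ssec:cppp}. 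Moreover, $N$ wiggles scattered over $N$ disjoint balls is not the same as $N$ wiggles on a single embedded segment of one orbit; and $N$ arbitrarily placed balls will in general be missed entirely by most orbits of $\SY$, whereas condition (ii) must hold through \emph{every} point of $M$. This is why the paper places its shells as shrunken top--dimensional simplices of a triangulation in general position with respect to $\SY$, so that every orbit meets one.

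The paper's actual route is different in structure: it first makes the formal family convex (hence genuinely Engel, with many wiggles) along the codimension--$1$ skeleton of a $\SY$--adapted triangulation (Proposition \ref{prop:reduction}), producing a family of $N$--convex shells over the top cells, and then solves the extension problem inside each shell by Little's homotopy and the wiggle--distribution argument of Propositions \ref{prop:slidingWiggles} and \ref{prop:extension}. The convexity and the wiggle count are thus built in everywhere from the start, rather than being inferred after the fact from the presence of local models. Separately, the relative parametric $h$--principle you invoke in your second step is exactly the hard content you would need to prove, and the paper's own appendix warns that looseness does \emph{not} support an $h$--principle relative in the domain, precisely because the reduction step cannot be carried out when the structure is already frozen on a region; so even that intermediate step is not available off the shelf.
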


We strengthen the existence $h$-principle in Theorem \ref{thm:main1} to the following uniqueness $h$-principle: 

\begin{theorem} \label{thm:main2}
Let $M$ be a smooth $4$-manifold and $K$ a compact CW--complex. There exists a positive integer $N_0$, depending only on $\dim(K)$, such that: Any two $N$--loose families $\SD_0, \SD_1: K \longrightarrow \Engel(M)$, $N \geq N_0$, are Engel homotopic if they are formally homotopic.

In addition, the resulting Engel homotopy $(\SD_t)_{t \in [0,1]}$ can be realised as a $(N-N_0)$--loose $K \times [0,1]$--family of Engel structures.
\end{theorem}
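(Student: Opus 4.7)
The plan is to deduce the uniqueness statement from a relative parametric form of Theorem~\ref{thm:main1} applied to the enlarged parameter space $K \times [0,1]$. Concretely, I would start by reinterpreting the given formal homotopy as a single formal family $\SD^f : K \times [0,1] \to \FEngel(M)$ interpolating between $\SD_0$ and $\SD_1$, observing that on the boundary $K \times \{0,1\}$ this family is already genuinely Engel and $N$-loose by hypothesis. The task is then to deform $\SD^f$, rel $K \times \{0,1\}$, into a family of honest loose Engel structures.

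The key step is to invoke Theorem~\ref{thm:main1} in a relative version, applied to the compact CW-complex $K \times [0,1]$ with the boundary conditions above. This should produce a formal homotopy, fixed on $K \times \{0,1\}$, to a family of genuine Engel structures, and the resulting path $(\SD_t)_{t \in [0,1]}$ is the desired Engel homotopy. This already yields the first assertion of the theorem.

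For the ``moreover'' clause, I would track the looseness parameter quantitatively. Applying Theorem~\ref{thm:main1} over the $(\dim K + 1)$-dimensional base $K \times [0,1]$, the parametric construction introduces a fixed loss $N_0$ depending only on the dimension of the parameter space. Starting from an $N$-loose input on the boundary, the output should thus be $(N - N_0)$-loose globally, with $N_0$ determined by $\dim K + 1$ and hence by $\dim K$ alone. To make this work, one must set up the construction so that it is allowed to consume a bounded number of existing loose cylinders per cell of $K \times [0,1]$ while still leaving a substantial reserve behind.

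The main obstacle I anticipate is preserving the prescribed boundary looseness through the relative h-principle. The proof of Theorem~\ref{thm:main1} inserts loose cylinders via repeated local modifications of the formal family, and if these modifications happen near $K \times \{0,1\}$ they could interfere with the loose cylinders already present in $\SD_0$ and $\SD_1$. To circumvent this I would use a collar argument: thicken the boundary loose cylinders slightly into the interior along the $[0,1]$-direction, and arrange the insertion operations of the relative construction to be supported away from these collars. Because the construction only adds loose cylinders and never destroys them, those present near the boundary then survive in the final family, producing the loose homotopy claimed. Verifying that the loose model admits such a collaring, and that the inductive h-principle in Theorem~\ref{thm:main1} can indeed be carried out with support disjoint from a prescribed collar of the boundary, is where I expect the real technical work to lie.
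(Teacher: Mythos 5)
Your overall skeleton---view the formal homotopy as a family over $K\times[0,1]$ that is already Engel and $N$--loose on $K\times\{0,1\}$, and run the existence argument of Theorem~\ref{thm:main1} relative to the ends---is indeed the frame of the paper's proof. But there is a genuine gap at exactly the point you flag as ``where the real technical work lies'', and your proposed fix does not address it. The relative reduction step (Proposition~\ref{prop:reduction}) requires the structure, over every flowbox of the chosen triangulation of $M\times K\times[0,1]$, to be put in a very specific normal form: an $N_1$--convex shell, i.e.\ development curves that are convex with wiggles inserted at prescribed times near one end of each $\SY$--interval. On $M\times K\times\{0,1\}$ the structures $\SD_0,\SD_1$ are frozen, and being $N$--loose only guarantees $N$ wiggles \emph{somewhere} along each orbit segment, in positions that have nothing to do with the triangulation you later choose. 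So you cannot ``arrange the insertion operations to be supported away from the boundary'': the boundary data itself must first be homotoped, \emph{through genuine Engel structures}, into the standard solid--convex--shell form over the top cells. This is the content of Proposition~\ref{prop:bootstrapping}, which is the actual core of the uniqueness proof and is absent from your outline. It works by covering $M\times K$ by flowboxes, using one existing wiggle in each to seed Proposition~\ref{prop:slidingWiggles}, and carefully counting how many wiggles get consumed by overlapping flowboxes; that count ($N_0\geq 2C+2D+1$ with $C=\dim(K)+4$ and $D$ a bound on simplex adjacencies) is where the constant $N_0$ comes from, not from the dimension of $K\times[0,1]$ as you suggest.

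Relatedly, your assertion that ``the construction only adds loose cylinders and never destroys them'' is false for the mechanism actually in play: Little's homotopy \emph{consumes} the wiggle it is applied to (the shell drops from $N$--convex to $(N-1)$--convex), and from the viewpoint of a neighbouring flowbox a single application can destroy up to two wiggles. This is precisely why a quantitative loss is unavoidable and why the bookkeeping above is needed; a collar argument that assumes monotone accumulation of certificates cannot produce the $(N-N_0)$--looseness of the resulting homotopy. Finally, you do not address the interpolation of the certificates: one must choose a family of line fields $\SY$ over $K\times[0,1]$ joining $\SY_0$ to $\SY_1$ and transverse to the formal kernel (and, for the $\infty$--loose conclusion, without periodic orbits, which uses Proposition~\ref{prop:PPP}); the triangulation must then be taken in general position with respect to this $\SY$ before any of the above can be set up.
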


Theorem \ref{thm:main1} provides existence and Theorem \ref{thm:main2} shows uniqueness. The notion of looseness for a family of Engel structures will be introduced in Definition \ref{def:loose}, Section \ref{sec:main}. Roughly, it is a quantitative property which measures the rotation of the Engel plane field $\SD$ with respect to a line field $\SY \subset \SD$, captured by a positive integer $N$. In particular, if a family of Engel structures $\SD: K \longrightarrow \FEngel(M)$ is $N_2$--loose, then it is $N_1$--loose for any $N_1 \leq N_2$. By definition, the line field $\SY$ is called the \emph{certificate} and a family that is $N$--loose with $N \geq N_0$, with $N_0$ as in the statement of Theorem \ref{thm:main2}, is said to be simply \emph{loose}.

Let $\FEngel(M,\SY)$ be the space of formal Engel structures containing some fixed $\SY\subset \SD$ transverse to the formal kernel $\SW$. If $\SY$ has no periodic orbits, Theorem \ref{thm:main1} can be strengthened to yield families that are $N$-loose for all $N$. Such a family is said to be $\infty$--loose. We denote by $\Loose(M,\SY)$ the subspace of Engel structures having $\SY$ as their certificate of $\infty$--looseness. Using Theorems \ref{thm:main1} and \ref{thm:main2} we can deduce the following complete $h$--principle:
\begin{theorem}\label{thm:main}
Let $M$ be a closed smooth $4$-manifold and $\SY$ a line field without periodic orbits. Then, the forgetful inclusion $\Loose(M,\SY) \longrightarrow \FEngel(M,\SY)$ is a weak homotopy equivalence.
\end{theorem}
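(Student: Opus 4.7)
The strategy is to check that the inclusion $\Loose(M,\SY) \hookrightarrow \FEngel(M,\SY)$ induces an isomorphism on every homotopy group $\pi_k$, establishing surjectivity and injectivity separately by combining Theorems \ref{thm:main1} and \ref{thm:main2}.

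For surjectivity on $\pi_k$, I would take a family $\SD \colon S^k \to \FEngel(M,\SY)$ representing an arbitrary class and apply Theorem \ref{thm:main1} with $K = S^k$ and $N \geq N_0$ to produce a formally homotopic $N$-loose family $\SD'$. Since $\SY$ is already part of the formal data in $\FEngel(M,\SY)$, transverse to the formal kernel $\SW$, and has no periodic orbits, the construction of Theorem \ref{thm:main1} should be relativisable so that $\SY$ itself serves as the certificate. Iterating the construction with $N \to \infty$ and patching the successive formal homotopies together via Theorem \ref{thm:main2} then upgrades $N$-looseness to $\infty$-looseness, producing a representative inside $\Loose(M,\SY)$.

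For injectivity on $\pi_k$, I would take two families $\SD_0, \SD_1 \colon S^k \to \Loose(M,\SY)$ that are formally homotopic through some $\SD_t \colon S^k \times [0,1] \to \FEngel(M,\SY)$. Both endpoints are $\infty$-loose with certificate $\SY$, so in particular $N$-loose for every $N \geq N_0$. Applying Theorem \ref{thm:main2} with $K = S^k$ and $N$ arbitrarily large yields an Engel homotopy realised as a $(N-N_0)$-loose $S^k \times [0,1]$-family; taking $N \to \infty$ and passing to a limit uniformly over $S^k$ gives a homotopy whose entire image lies in $\Loose(M,\SY)$.

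The main obstacle will be aligning the certificate produced by Theorems \ref{thm:main1} and \ref{thm:main2} with the prescribed line field $\SY$. Those statements guarantee only $N$-looseness with \emph{some} certificate, whereas here we need the certificate to be exactly $\SY$ and the looseness to be infinite. The absence of periodic orbits of $\SY$ is precisely what allows loose regions to be inserted along $\SY$ at arbitrary density, so that the auxiliary certificate provided by the theorems can be parametrically deformed into $\SY$. Making this alignment uniform in $N$, so that the finite-to-infinite looseness limit is genuinely achievable rather than merely formal, is the technical heart of the argument.
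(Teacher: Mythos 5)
Your overall architecture (surjectivity from Theorem \ref{thm:main1}, injectivity from Theorem \ref{thm:main2}, applied on each $\pi_k$) is the same as the paper's, which phrases it as a single compression argument for maps of pairs $(\D^k,\partial\D^k) \to (\FEngel(M,\SY),\Loose(M,\SY))$. Your worry about ``aligning the certificate'' is also a non-issue in the paper: the working version of the existence theorem, Proposition \ref{prop:main1}, lets you \emph{prescribe} the certificate in advance, and the line field $\SY$ is held fixed throughout the reduction and extension steps (see the end of the proof of Proposition \ref{prop:reduction}); likewise the homotopy built in the proof of Theorem \ref{thm:main2} uses a fixed interpolating family of line fields.

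The genuine gap is in how you pass from $N$-looseness to $\infty$-looseness. You propose to iterate the construction for $N\to\infty$ and ``pass to a limit uniformly over $S^k$''. An infinite concatenation of Engel homotopies has no reason to converge in the $C^2$ topology on plane fields, and even if it did there is no reason the limit would be Engel or that the wiggle counts would be preserved in the limit; this step as written would fail. The point you are missing is that \emph{no limit is needed}: a single application of the finite construction already produces an $\infty$-loose family when $\SY$ has no periodic orbits. The solid convex shells $B_i$ are obtained by shrinking the top cells of a triangulation of $M\times K$, so every $\SY$-orbit meets some $B_i$; since no orbit is closed, every orbit accumulates and therefore re-enters the union of the shells infinitely many times. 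Because wiggles are intrinsically defined via the development map $\gamma_\SY(\SD)$, each orbit therefore carries infinitely many wiggles, i.e.\ the family is $\infty$-loose on the nose. This recurrence argument (the last paragraph of the proof of Proposition \ref{prop:main1}, and the final sentence of the proof of Theorem \ref{thm:main2}) is exactly where the hypothesis that $\SY$ has no periodic orbits is used, and it replaces your limiting procedure entirely.
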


In Section \ref{sec:app} we compare these statements with other recent developments regarding flexibility in Engel topology. In Subsection \ref{ssec:EngelProlong} we define the notion of Cartan/Lorentz prolongation and we prove:
\begin{corollary} \label{cor:prolong}
Any family of Lorentz or orientable Cartan prolongations is loose, up to Engel homotopy. In particular, such families are classified, up to Engel homotopy, by their formal data.
\end{corollary}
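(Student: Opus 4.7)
The plan is to reduce the corollary to Theorem \ref{thm:main2}: it suffices to show that any $K$--family of Lorentz or orientable Cartan prolongations is Engel homotopic to an $N$--loose family with $N \geq N_0$. Once that is established, two such families with the same formal data are each Engel homotopic to loose representatives that remain formally homotopic, and Theorem \ref{thm:main2} then supplies an Engel homotopy between those representatives. By transitivity the original families are Engel homotopic, which is precisely the classification by formal data claimed in the corollary.

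To produce the loose representative, I would first identify a canonical candidate for the certificate. Let $\pi : M \to B$ be the Cartan prolongation of a contact $3$--manifold $(B, \xi)$, so $M = \PP(\xi)$ in the Lorentz case and the unit circle bundle of $\xi$ in the orientable case. At a point $(p, \ell) \in M$, the Engel plane is $\SD_{(p, \ell)} = (d\pi)^{-1}(\ell)$, and it splits canonically as $\SD = \SW \oplus \SY$, where $\SW = \ker d\pi$ is the vertical kernel and $\SY$ is the horizontal lift inside $\SD$ of the tautological line $\ell \subset \xi$. Thus $\SY \subset \SD$ is transverse to $\SW$ and is a natural candidate certificate, depending canonically on the Cartan prolongation data and well defined in families. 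A $C^0$--small perturbation of $\xi$ (carried out through Cartan prolongations) makes $\SY$ free of periodic orbits, a generic condition on the base contact datum.

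The core step is then to pump up the looseness parameter with respect to $\SY$ by an Engel homotopy. My plan is to insert a local Engel model along short arcs transverse to $\SW$: one constructs an Engel structure on a slab that agrees with the Cartan prolongation near its boundary but introduces a prescribed, arbitrarily large number of rotations of $\SD$ about $\SY$ in its interior. Since the local model and the Cartan prolongation restricted to the slab are connected through Engel structures relative to the boundary, this insertion realises an Engel homotopy on $M$; after performing enough insertions the family becomes $N$--loose for any prescribed $N \geq N_0$, and Theorem \ref{thm:main2} then finishes the argument.

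The main obstacle is the explicit construction of the local Engel model used in the third step: unlike in contact topology, arbitrary rotations of the plane field break the Engel condition, so one needs an Engel--Lutz--type twist adapted to arcs transverse to $\SW$ and compatible with the structure of a Cartan prolongation in families. Designing this local model and verifying that it genuinely gains looseness in the sense of Definition \ref{def:loose} is the technical heart of the argument; the remaining ingredients are formal consequences of the results proved earlier in the paper.
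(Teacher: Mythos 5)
Your reduction to Theorem \ref{thm:main2} is the right framing, but the existence step --- producing a loose representative by Engel homotopy --- has a genuine gap, and it sits exactly where you place it: the ``Engel--Lutz--type twist'' along arcs transverse to $\SW$ is never constructed, and it is not a routine ingredient. Arbitrarily increasing the rotation of $\SD$ about a transverse arc while (i) remaining Engel throughout, (ii) matching the prolongation near the boundary of the slab, and (iii) staying connected to the original structure \emph{through Engel structures rel boundary} is precisely the kind of local statement this paper does not have at its disposal (compare Section \ref{sec:appendix}: looseness does not yield an $h$--principle relative in the domain, and the local modifications of \cite{PV} are a separate, nontrivial development). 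As written, the argument is incomplete at its technical heart.

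The deeper issue is that your choice of certificate points in the wrong direction, which forces you to manufacture rotation that the prolongation already contains. In a Cartan or Lorentz prolongation the tautological distribution $\SD(p,\ell)=\pi^*[\ell]$ performs a full rotation as one traverses a fibre of the bundle over $V$; for the Lorentz prolongation this fibrewise curve in $\R\PP^2$ is convex (the projectivised light cone), so the \emph{vertical} direction already sees arbitrarily many wiggles --- its only defect as a certificate is that its orbits are closed, so no embedded segment carries more than a bounded number of them. The paper's proof therefore takes $\SY_s$ to be a small tilt of the fibre direction: after increasing the convexity parameter $r$ (an Engel homotopy through Lorentz prolongations), the return maps $\phi_{k,s}$ of $\SY_s$ approximate arbitrarily short time flows of the transversal $\nu(k)$, hence have no fixed points up to $N$ iterates for $s\neq 0$ small; each orbit then winds $N$ times around the fibre direction before closing up and collects $N$ wiggles on an embedded segment. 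No local model is inserted and no genericity assumption on the base contact structure is needed. Your horizontal lift of $\ell$, by contrast, only sees the rotation of $\xi$ along Legendrian directions, which is not quantitatively controlled --- this is why you are driven to an insertion argument you cannot complete. (Minor further points: the horizontal lift is not canonical without a choice of connection, and the orientable Cartan case should be handled, as in the paper, by first pushing off to a Lorentz prolongation.)
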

Which subsumes one of the main results in \cite{dP17}. In Subsection \ref{ssec:cppp}, we prove that the Engel structures produced in \cite{CPPP} are homotopic to loose ones, and that that those constructed using Engel open books in \cite{CPV} are loose.

The article is organised as follows: Section \ref{sec:preliminaries} is dedicated to convexity in Engel topology, including all the basic theory needed for our results. Section \ref{sec:main} defines and explores Engel looseness. The proof of Theorems \ref{thm:main1}, \ref{thm:main2}, and \ref{thm:main} is structured in two parts: existence of loose families (Subsection \ref{ssec:existence}) and uniqueness (Subsection \ref{ssec:uniqueness}). Section \ref{sec:app} contains applications, including the proof of Corollary \ref{cor:prolong}. Section \ref{sec:appendix} provides a detailed discussion comparing flexibility phenomena for Engel structures and contact structures. Particularly, we discuss the relation between the present article and the work in \cite{PV}.

\textbf{Acknowledgements.} The authors are grateful to the American Institute of Mathematics and to the organisers and participants of the workshop ``Engel structures'' for their interest in this work. They are particularly thankful to T. Vogel for reading a preliminary version of these results. The authors are also grateful to the referees of the article, whose suggestions and comments have greatly improved the present work. The authors are supported by Spanish National Research Projects MTM2016--79400--P, MTM2015-72876-EXP and SEV2015-0554. R.~Casals is supported by the NSF grant DMS-1841913 and a BBVA Research Fellowship. \'A.~del Pino is supported by the NWO Vici Grant no. 639.033.312 of Marius Crainic.

\section{Engel structures and convexity} \label{sec:preliminaries}

In this section we state the basic facts and techniques used in the study of Engel structures. We focus on the interaction between Engel structures and families of convex curves in the $2$--sphere. This relationship will allow us to prove Theorems \ref{thm:main1}, \ref{thm:main2}, and \ref{thm:main}.

We use the notation $\Op(A)$ to denote an arbitrarily small neighbourhood of the subset $A$.

\subsection{Engel structures} \label{ssec:EngelStructures}

The central objects of study are the following geometric structures:

\begin{definition}
An \textbf{Engel structure} is a maximally non--integrable $2$--plane field $\SD\subset TM$. That is, $\SE = [\SD,\SD]$ is an everywhere non--integrable $3$--distribution, i.e. $TM = [\SE,\SE]$.

The distribution $\SE$ is said to be an \textbf{even-contact structure}. It contains a line field $\SW$ uniquely defined by the equation $[\SW,\SE] \subset \SE$. The line field $\SW$ is said to be the \textbf{kernel} of $\SE$.
\end{definition}

It follows from its definition that the line field $\SW$ is contained in the Engel structure $\SD$. In consequence, an Engel structure $\SD$ induces a complete flag $\SW \subset \SD \subset \SE$ on the 4-manifold $M$ \cite{CPPP}. In addition, the Lie bracket induces two canonical bundle isomorphisms:
\begin{align} \label{eq:iso1}
\det(\SD) \cong \SE/\SD,
\end{align}
\begin{align} \label{eq:iso2}
\det(\SE/\SW) \cong TM/\SE.
\end{align}

Decoupling the differential relation that determines Engel structures leads us to define their formal counterpart as follows: a \textbf{formal Engel structure} is a complete flag $\SW \subset \SD \subset \SE\subset TM$ endowed with bundle isomorphisms as in Equations (\ref{eq:iso1}) and (\ref{eq:iso2}). In this case, there is no differential relationship between the different distributions that constitute the flag. We will often refer to $\SW$ as the \emph{formal kernel} of the \emph{formal even-contact structure} $\SE$.

Let $\Engel(M)$ be the space of Engel structures endowed with the $C^0$-topology, and $\FEngel(M)$ the space of formal Engel structure endowed with the $C^2$-topology. The present work focuses on the homotopy theoretic nature of the inclusion
$$s:\Engel(M) \longrightarrow \FEngel(M).$$
This forgetful inclusion is continuous with the chosen topologies. This map is classically called the {\it scanning map} \cite{CPPP,Gr86} and it is the main focus in the study of $h$-principles \cite{EM}.

\subsection{Engel flowboxes and convexity} \label{ssec:EngelCharacterisation}

Let us explain a useful method to construct Engel structures locally. For reference, a $2$-plane in a smooth $3$-manifold is maximally non-integrable, i.e.~a contact structure, if and only if the contact planes strictly rotate with respect to a foliation by Legendrian lines \cite{Ge}. In the same vein, the Engel condition can be geometrically described in terms of a flowbox for a line field contained in the Engel structure, as follows.

Fix coordinates $(p,t)$ in the product $\D^3 \times [0,1]$ and consider the bundle isomorphism
\[ d_{(p,t)}\pi: T_{(p,t)}(\D^3 \times \{t\}) \longrightarrow T_p\D^3, \]
where $\pi: \D^3 \times [0,1] \longrightarrow \D^3$ is the projection onto the first factor. Any given fibrewise identification of the projectivized bundle $\PP(T\D^3)$ with $\R\PP^2$ obtained by fixing a framing of $T\D^3$ can be lifted to an identification
\[ d_{(p,t)}\pi: \PP T_{(p,t)}(\D^3 \times \{t\}) \longrightarrow \R\PP^2. \]

We focus on the $2$--distributions $\SD$ of the form $\langle \partial_t, X(t) \rangle$, with $X(t)$ a vector field tangent to the slice $\D^3 \times \{t\}$. The vector field $X$ can be regarded as a $\D^3$--family of curves 
\[ X_p: [0,1] \longrightarrow \R\PP^2, \]
\[ X_p(t) = d_{(p,t)}\pi([X(p,t)]), \]
where $[-]$ denotes the associated line. The characterization of the Engel condition for $\SD$ then reads:

\begin{proposition}[\cite{CPPP}] \label{prop:EngelCharacterisation}
The module $\SE = [\SD,\SD]$ is a $3$-distribution on $\Op(p,t)$ if and only if the curve $X_p$ is immersed at time $t$.

The $2$-plane field $\SD$ is an Engel structure on a neighbourhood of the point $(p,t)$ if and only if, additionally, at least one of the following two conditions holds:
\begin{enumerate}
  \item[A.] The curve $X_p(t)$ has no inflection point at time $t$,
  \item[B.] The $2$--distribution $\langle X(q,t),X'(q,t)\rangle$ is a contact structure in $\Op(p)\times\{t\}$.
\end{enumerate}

If $\SD$ is Engel, its kernel $\SW$ is spanned by $\partial_t$ at the point $(p,t)$ if and only if the curve $X_p$ has an inflection point at time $t$.\hfill$\Box$
\end{proposition}
By definition, $t$ is an \emph{inflection point} for the curve $X_p(t)$ if $X_p$ has a tangency at $t$ of order at least $2$ with the great circle $\langle X_p(t), X_p'(t) \rangle$. We will focus on assumption (A), i.e. the curves $X_p$ will be everywhere convex (or concave); in particular, $\partial_t$ will be transverse to the kernel.

\begin{remark} \label{rem:CPPP}
The techniques developed in \cite{CPPP} are based on the interaction between conditions (A) and (B). For completeness, we prove in Subsection \ref{ssec:cppp} that the families constructed in \cite{CPPP} are loose. \hfill$\blacksquare$
\end{remark}

\subsection{Convex curves and little wiggles} \label{ssec:convex}

Proposition \ref{prop:EngelCharacterisation} connects the study of Engel structures with the theory of convex curves in $\R\PP^2$. The classical results in this direction \cite{Lit,Sal} are stated for convex curves into the 2-sphere $\NS^2$, but they easily translate to the $\R\PP^2$ setting. Let us explain this in detail.

Fix a $1$-manifold $I$. Let $\Imm(I)$ be the space of immersions of $I$ into $\R\PP^2$, endowed with the $C^1$--topology. Consider the space $\FImm(I)$ of formal immersions of $I$ into $\R\PP^2$, endowed with the $C^0$--topology, and the subspace $\SL(I)\subset\Imm(I)$ of locally convex curves, endowed with the $C^2$--topology. The inclusion of $\SL(I)$ into $\Imm(I)$ is continuous and the formal counterpart of $\SL(I)$ is homotopy equivalent to $\FImm(I)$ \cite{EM,Gr86}.

The following notion will be important to us:
\begin{definition} \label{def:wiggle}
A curve $g \in \Imm(I)$ has a \textbf{wiggle} in the interval $[a,b] \subsetneq I$ if $g(a) = g(b)$, and, after identifying endpoints, $g|_{[a,b]}$ is a smooth closed convex embedded curve.

The curve $g$ has $n$ wiggles if there are $n$ intervals $\{I_i \subsetneq I\}_{i=1}^n$, each of them a wiggle of $g$. We require the interiors of these intervals to be pairwise disjoint, but we allow their endpoints to agree. When this happens, we say that the wiggles are concatenated.
\end{definition}

\begin{remark} \label{rem:wiggle}
Suppose $n$ wiggles $\{I_i\}_{i=1}^n$ are concatenated. Then, one may be able to choose some other interval $I' \subset I$ different from them, contained in their union, which is also a wiggle. Due to the embeddedness of a wiggle, we deduce that $I'$ is either one of the $I_i$ or it intersects exactly two of the original intervals. \hfill$\blacksquare$
\end{remark}

We depict wiggles in several figures. For clarity, we often do so up to a small homotopy through convex curves.

\subsubsection{Adding wiggles to curves} \label{sssec:wiggle}

Let $K$ be a compact manifold, $n\in\N$ a positive integer, and fix maps $f: K \longrightarrow \Imm(I)$ and $t: K \longrightarrow I$. From this data, we construct a new map
$$ f^{[n\# t]}: K \longrightarrow \Imm(I) $$
as follows: For each $k \in K$, we cut the curve $f(k)$ at the point $f(k)(t(k))$ and we add $n$ small convex loops, smoothing the result. This defines the map $f^{[n\# t]}$, which has (as least) $n$ concatenated wiggles. We can and do assume that the two maps $f$ and $f^{[n\# t]}$ agree as parametrised curves outside of an arbitrarily small neighbourhood of the inserted loops. The insertion of wiggles can be done over different points as long as we have functions $t_0,\dots,t_m: K \longrightarrow I$ with disjoint images; we then write $f^{[n_0\#t_0,\dots,n_m\#t_m]}$ for the resulting family.

Work of J.A. Little \cite{Lit} implies that the space of wiggles passing through a point with a given direction is contractible. Therefore, our cutting process is unique up to a convex homotopy of the added wiggle.

\begin{remark}
Wiggles which are concatenated may have different images (unlike in the construction just provided). However, invoking again the contractibility result due to Little, we deduce that we can make the images be the same by a homotopy through convex curves. \hfill$\blacksquare$
\end{remark}

\subsubsection{Achieving convexity}

The purpose of adding wiggles is that they provide convexity. The first ingredient we need is that any immersed curve $f \in \Imm(I)$ is homotopic to a curve with sufficiently many wiggles:
\begin{lemma}[\cite{Lit,Sal}]\label{lem:addingWiggles}
Let $f: K \longrightarrow \Imm(I)$ be a $K$--family of immersed curves and $t_0 \in I$. Then, the families $f^{[n_0\#t_0]}$ and $f^{[n_0+2\#t_0]}$ are homotopic through immersions. The homotopy can be assumed to have support in a small neighborhood $(t_0-\varepsilon , t_0+\varepsilon)$ of the cutting point $t_0$. \hfill$\Box$
\end{lemma}
The homotopy of immersed curves described in Lemma \ref{lem:addingWiggles} is shown in Figure \ref{fig:immersions}.

\begin{figure}[ht] 
\centering
\includegraphics[scale=0.27]{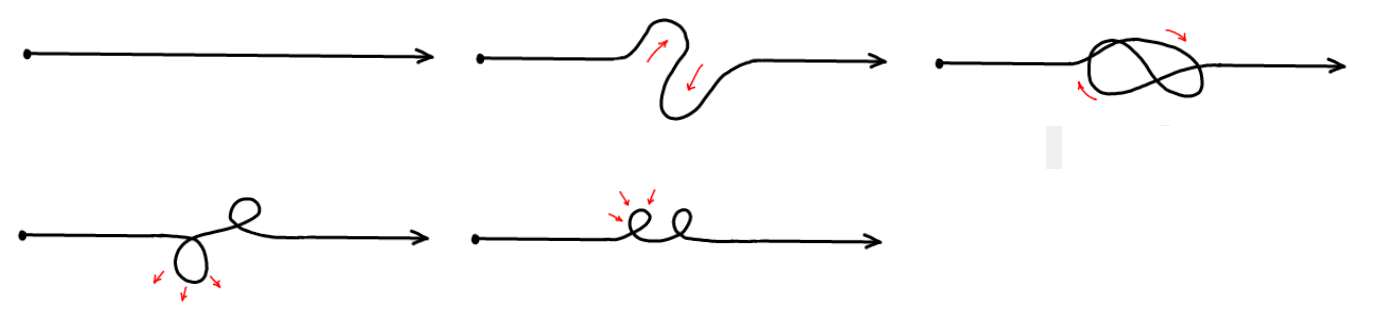}
\caption{Homotopy of immersed curves where two wiggles are introduced. In the last step, we take the concave wiggle and we push it around $\R\PP^2$ so that it appears as a convex wiggle.}
\label{fig:immersions}
\end{figure}

The key result in \cite{Lit}, explained in detail in \cite[Section 6]{Sal}, states that if $f$ is already convex and one extra loop is introduced, additional wiggles may be added using a homotopy through convex curves:
\begin{lemma}[\cite{Lit,Sal}]\label{lem:convexity}
Let $f: K \longrightarrow \SL(I)$ be a $K$--family of convex curves and $t_0 \in I$. Then, the families $f^{[n_0\#t_0]}$ and $f^{[n_0+2\#t_0]}$ are homotopic through convex curves as soon as $n_0>0$. The homotopy can be assumed to have support in a small neighborhood $(t_0-\varepsilon , t_0+\varepsilon)$ of $t_0$ containing the existing wiggle. \hfill$\Box$
\end{lemma}
This homotopy of convex curves from Lemma \ref{lem:convexity} is shown in Figure \ref{fig:Little}; we refer to it as \emph{Little's homotopy}.

\begin{figure}[ht] 
\centering
\includegraphics[scale=0.2]{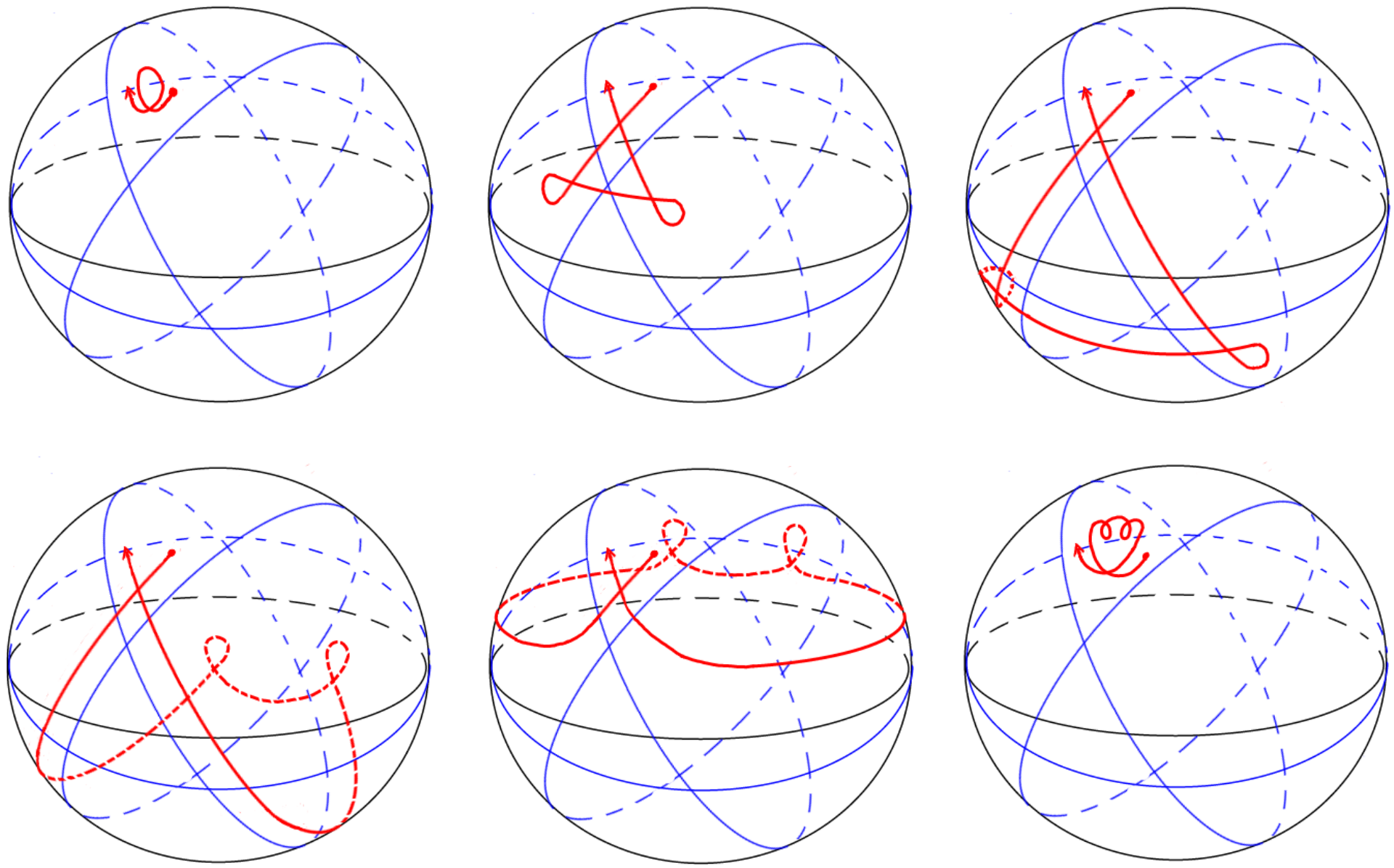}
\caption{Little's homotopy for convex curves in $\NS^2$. The closed curves correspond to great circles. The first figure shows a convex curve with a little wiggle. By pushing the wiggle down, it can be taken to the second figure. It is convex because it is comprised of three segments that are slight push-offs of equators whose corners have been rounded to preserve convexity. The same is true for the third and fourth figures. In the last two images we push towards the opposite hemisphere, yielding a curve with three wiggles. As shown, this process is relative, in the domain, to the complement of a small neighbourhood of the wiggle.}
\label{fig:Little}
\end{figure}

The last remark we need is: Given any curve $f \in \Imm(I)$ and any sufficiently dense collection of points $t_0,\dots,t_m: K \longrightarrow I$, the curve $f^{[1\#t_0,\dots,1\#t_m]}$ will be homotopic to a convex curve. Furthermore, the $C^1$-size of the homotopy needed to achieve convexity will be inversely proportional to the number of wiggles introduced. This is the content of the following Proposition, which is the crucial geometric ingredient behind Theorem \ref{thm:main}.
\begin{proposition} \label{prop:slidingWiggles}
Let $K$ be a compact manifold, $A \subset K$ a closed submanifold, and $0<a<1/2$ a positive real constant. Suppose that $f: K \longrightarrow \Imm([0,1])$ is a family of immersions such that $f(A) \subset \SL([0,1])$ and there exists $F: K \longrightarrow \SL([0,a]\cup[1-a,1])$ such that $f(k)(t) = F^{[1\#a]}(k)(t)$.

Then, there exists a family $f: K \times [0, \infty) \longrightarrow \Imm([0,1])$ such that:
\begin{itemize}
\item[-] For $s$ large enough $f(k,s)$ is everywhere convex,
\item[-] $f(k,s)(t) = f(k)(t)$ if $s=0$, $k \in A$, or $t \in [0,a/2] \cup [1-a,1]$,
\item[-] The number of wiggles of $f(k,s)$ in $[a,1-a]$ goes to infinity as $s \longrightarrow \infty$ if $k \notin \Op(A)$. The maximum distance between two consecutive wiggles in this segment is $O(1/s)$ and the radius of each wiggle is $O(1/s)$.
\end{itemize}
\end{proposition}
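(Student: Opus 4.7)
The strategy is to repeatedly apply Lemma \ref{lem:convexity} to propagate the single wiggle of $f = F^{[1\#a]}$ into many densely packed wiggles throughout $[a, 1-a]$. First, I would introduce a smooth cutoff $\chi: K \to [0,1]$ with $\chi \equiv 0$ on a neighborhood of $A$ and $\chi \equiv 1$ outside a slightly larger neighborhood, and use $\chi(k) \cdot s$ as the effective time: this automatically makes the homotopy trivial over $A$ and continuous in $k$. Away from $A$, I would construct $f(k,s)$ stage by stage, indexed by $n = \lfloor s \rfloor$, arranging that at stage $n$ the curve carries $3^n$ wiggles evenly distributed over $[a, 1-a]$, each of radius $\leq C/3^n$.

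The passage from stage $n$ to stage $n+1$ proceeds in two steps. First, at each of the $3^n$ existing wiggles I would apply Lemma \ref{lem:convexity} in its immersion version to locally triple it: the lemma provides a homotopy of immersions supported in an arbitrarily small neighborhood of each wiggle, turning $n_0 = 1$ into $n_0 = 3$, and these can be performed simultaneously since they have disjoint supports. Second, I would slide the newly generated wiggles along the curve by continuously reparametrizing the insertion points, redistributing them evenly across $[a, 1-a]$; sliding is a homotopy through immersions since $f$ is itself an immersion on all of $[a, 1-a]$. The wiggle count at time $s$ is then $3^{\lfloor s \rfloor}$, so both the maximum spacing and the radius are $O(1/s)$, as required. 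The conditions $f(k,s) = f(k)$ on $t \in [0, a/2] \cup [1-a, 1]$ and on $A$ are preserved since all operations are supported in $(a/2, 1-a)$ and scaled by $\chi(k)$.

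The main obstacle is verifying that $f(k, s)$ is globally convex for all $k$ once $s$ exceeds some $s_0$. For $k \in A$ this is automatic, since $f(k,s) = f(k) \in \SL([0,1])$. For $k$ outside $\Op(A)$ (so $\chi(k) = 1$), the quantitative idea is that densely packed wiggles dominate the tangent dynamics: each wiggle contributes a full $2\pi$ rotation of the tangent direction, always in one fixed sense, while the arcs of the underlying $f(k)$ between consecutive wiggles have length $O(1/s)$ and hence tangent rotation $O(1/s)$ using a uniform $C^2$-bound coming from compactness of $K$. For $s$ sufficiently large, the wiggle rotation dominates, so the tangent rotates strictly monotonically, which is exactly the convexity criterion. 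For $k$ in the transition region where $0 < \chi(k) < 1$, the curve $f(k, s)$ stays $C^2$-close to a convex curve $f(k_0)$ for some nearby $k_0 \in A$, and convexity in this region follows from $C^2$-openness of $\SL([0,1])$ inside $\Imm([0,1])$.
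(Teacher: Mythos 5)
Your construction of the homotopy (inductive tripling of wiggles via Lemma \ref{lem:convexity}, sliding the insertion points to redistribute them, and a cut-off $\chi$ to make everything relative to $A$) parallels the paper's proof, which does the same with Little's homotopy applied once per stage at the wiggle sitting at $t=a$, so that the count grows linearly rather than exponentially; that difference is immaterial. The genuine gap is in your final convexity argument. Inserting wiggles into $f(k)$ does not by itself make the curve locally convex: local convexity is the \emph{pointwise} condition that the geodesic curvature never vanishes, and on the arcs of the original curve $f(k)$ lying between consecutive wiggles the curvature is still whatever it was before --- possibly negative --- no matter how short those arcs are. Your claim that ``the wiggle rotation dominates, so the tangent rotates strictly monotonically'' conflates total (averaged) rotation with the pointwise monotonicity that convexity requires; a tiny convex loop followed by an arbitrarily short concave arc is still non-convex on that arc.

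What is missing is an additional deformation of the arcs between the wiggles: once $s$ is large these arcs are $C^1$-small, and each can be replaced by a convex arc at the cost of absorbing the correction into an adjacent wiggle --- this is precisely why one needs the wiggles to be dense before convexifying. The paper's last paragraph does exactly this: it uses the wiggles in even positions to convexify the intervening arcs, while the wiggles in odd positions are left untouched so that the surviving wiggle count is still $O(s)$ with spacing and radius $O(1/s)$. Without this step your $f(k,s)$ is never everywhere convex for $k\notin A$, only ``convex on average.'' A secondary, fixable point: in the transition region $0<\chi(k)<1$ you should first shrink $\Op(A)$ so that $f(k)$ is already convex there (using $C^2$-openness of $\SL([0,1])$ and continuity of $f$ near $A$ in the $C^2$ sense), since otherwise the partially executed homotopy need not land near any convex curve.
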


\begin{figure}[ht]
\centering
\includegraphics[scale=0.45]{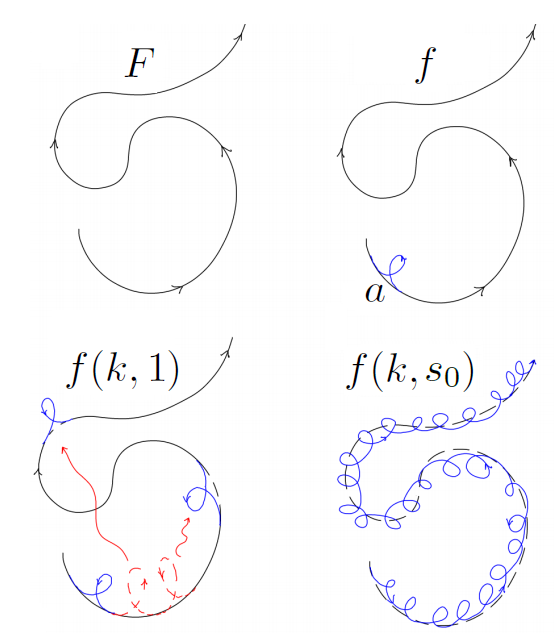}
\caption{We use Little's homotopy to create several wiggles from a given one. These wiggles are then distributed along the curve to achieve convexity everywhere.}
\label{fig:slidingWiggles}
\end{figure}

Figure \ref{fig:slidingWiggles} depicts the content of Proposition \ref{prop:slidingWiggles}. The density of wiggles goes to infinity as $s$ does, as we will see in the proof, while their size has order $O(1/s)$.

\begin{proof}[Proof of Proposition \ref{prop:slidingWiggles}]
We construct $f(k,s)$ in each interval $s=(n,n+1)$ by induction on $n\in\N$. For $s \in [n, n+1]$ and $k$ in the complement of an arbitrarily small neighborhood $\Op(A)$ of $A$, we obtain the following properties:
\begin{itemize}
\item[-] $f(k,n)= f(k)^{[1\#a, 1\#(a+\frac{1}{2n}(1-2a)), \ldots, 1\#(a+\frac{2n-1}{2n}(1-2a)), 1\#(1-a)]}$.
\item[-] $f(k,n+1)= f(k)^{[1\#a, 1\#(a+\frac{1}{2n+2}(1-2a)), \ldots, 1\#(a+\frac{2n+1}{2n+2}(1-2a)), 1\#(1-a)]}$.
\item[-] There are $2n$ paths sliding the last $2n$ wiggles of $f(k,n)$ to the last $2n$ wiggles of $f(k, n+1)$. We change the insertion points from $t=a+\frac{j}{2n}(1-2a)$ to $t=a+\frac{j+2}{2n+2}(1-2a)$, $j=1,\ldots, 2n$, by linear interpolation.
\item[-] The radius of those $2n$ wiggles is exactly $1/s$.
\item[-] In $Op(A)$, $f(k,s)$ remains convex. Moreover, $f(k,s)=f(k,0)$ for $k\in A$.
\end{itemize}

Indeed, this is simple to build: the only geometrically non--trivial part corresponds to the family in the interval $t\in (a, a + \frac{2}{2n}(1-2a))$. In the parameter interval $s\in (n, n+1/2)$ we use Little's homotopy to produce three wiggles out of the existing wiggle at time $t=a$, always ensuring that they have radius $1/n$. For $s\in(n+1/2, n+1)$, we linearly move the insertion points to place them at times $a$, $a+ \frac{1}{2n+2}(1-2a)$ and $a+ \frac{2}{2n+2}(1-2a)$ for $s=n+1$. Since convexity is preserved during Little's homotopy, we can assume that the insertion of the additional wiggles is done relative to $A$ by cutting--off Little's homotopy for $k \in \Op(A) \setminus A$. 

To conclude the argument we need to distribute the convexity of the wiggles to make $f(k,s)$ everywhere convex for $s$ large enough. For that, use only half of the wiggles to create convexity, i.e. the ones placed in even positions with respect to the order provided by the insertion time. As explained before, this makes the new family convex for $s$ large. Moreover, the odd wiggles are unaffected by this process. Therefore, the number of wiggles is $O(s)$ and they are uniformly distributed.
\end{proof}

\subsection{The development map} \label{ssec:developmentMap}

Let us now introduce the notion of \emph{development map}, which is used in order to define loose Engel structures. Geometrically, the development map allows us to intrinsically describe the turning of an Engel structure $\SD$ with respect to a line field $\SY$ contained inside it. Note that this time we are not resorting to the use of charts/flowboxes, as in Subsection \ref{ssec:EngelCharacterisation}. The development map is well-known in the particular case where the line field is the kernel, and under this assumption it was first studied by R. Montgomery \cite{Mon}.

Since the development map encodes how the 2-plane $\SD$ moves along $\SY$ in terms of the linear holonomy of $\SY$, it is natural to define it using the language of groupoids. The monodromy groupoid \cite{MM} is defined as follows:

\begin{definition}
Let $(M,\SY)$ be a foliated manifold. The \emph{monodromy groupoid} $\Mon(M,\SY)$ is the set of triples $(p,q,\alpha)$ where $p$ and $q$ belong to the same leaf of $\SY$ and $\alpha$ is a homotopy class of leafwise paths connecting $p$ with $q$.

The monodromy groupoid is endowed with the following operations:
\begin{itemize}
\item[-] Source and target maps $s,t: \Mon(M,\SY) \longrightarrow M$ defined by $s(p,q,\alpha)=p$, $t(p,q,\alpha)=q$.
\item[-] A partially defined multiplication map $\Mon(M,\SY) \times_M \Mon(M,\SY) \longrightarrow M$
$$(p,q,\alpha) \times (q,q',\alpha')\longmapsto(p,q',\alpha . \alpha').$$
Here $.$ denotes concatenation of homotopy classes of paths.
\item[-] Unit map $M \longrightarrow \Mon(M,\SY)$ that takes $p$ to $(p,p,[p])$, the class of the constant path at $p$,
\item[-] Inverse map $\Mon(M,\SY) \longrightarrow \Mon(M,\SY)$ that takes $(p,q,\alpha)$ to $(q,p,\alpha^{-1})$,
\item[-] A partially defined action $\bullet: \Mon(M,\SY) \times_M M \longrightarrow M$ on $M$ defined as $(p,q,\alpha) \bullet p = q$.
\end{itemize}
\end{definition}
By construction, the orbit of a point $p \in M$ under the action is exactly the leaf $L$ of $\SY$ in which it is contained. The following result \cite{MM} states that $\Mon(M,\SY)$ can be endowed with a smooth structure:
\begin{lemma}
$\Mon(M,\SY)$ is a Lie groupoid i.e. it is a smooth manifold, possibly non--Hausdorff and non-second-countable, with smooth structure maps. Its dimension is $\dim(\SY) + \dim(M)$.\hfill$\Box$
\end{lemma}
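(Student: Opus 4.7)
The plan is to construct a smooth atlas on $\Mon(M,\SY)$ whose charts are modelled on chains of foliation flowboxes along representative paths, and then verify that all structure maps are smooth in this atlas. The non-Hausdorffness and failure of second countability will emerge naturally from the fact that topologically distinct homotopy classes of leafwise paths can be $C^0$--close inside $M$ while remaining non-comparable inside any single chart.

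Concretely, fix an arrow $\gamma = (p,q,\alpha) \in \Mon(M,\SY)$ and choose a representative path $\tilde\alpha: [0,1] \to M$ tangent to $\SY$. Since $\tilde\alpha([0,1])$ is compact, I would cover it by finitely many foliation flowboxes $U_0, \ldots, U_n$ adapted to $\SY$, i.e.\ boxes of the form $T_i \times I_i$ with $T_i$ a transversal to $\SY$ and $I_i$ an interval along the leaves. After shrinking transversals and refining the cover, the path can be arranged to pass through each $U_i$ exactly once, with each overlap $U_i \cap U_{i+1}$ sitting inside a single plaque of both sides. A neighbourhood of $\gamma$ in $\Mon(M,\SY)$ is then parametrised by a pair (source point $p'$ in $U_0$, endpoint $q'$ in the plaque of $U_n$ obtained from $p'$ by the successive plaque-to-plaque transitions prescribed by $\tilde\alpha$). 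This chart is diffeomorphic to $U_0 \times I_n$, and so has dimension $\dim(M) + \dim(\SY)$: the source contributes $\dim(M)$ degrees of freedom and the leafwise sliding of the endpoint contributes $\dim(\SY)=1$ more.

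To assemble these charts into a smooth structure, I would check that replacing $\tilde\alpha$ by a homotopic representative, or refining the cover, gives smoothly compatible parametrisations; this is a standard flowbox bookkeeping argument. Smoothness of source and target is immediate from the product description of a chart. The unit $p \mapsto (p,p,[p])$ is read off from a trivial one-chart cover. The inverse $(p,q,\alpha) \mapsto (q,p,\alpha^{-1})$ corresponds to reversing the order of the flowboxes and their internal coordinates, hence is a diffeomorphism between charts. Finally, multiplication is smooth since concatenating two arrows corresponds to concatenating two flowbox chains and then reading off the endpoint, which depends smoothly on all inputs wherever defined.

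The main obstacle, and the reason the statement carefully allows non-Hausdorff, non-second-countable models, is genuinely present: a sequence $\gamma_k = (p_k,q_k,\alpha_k)$ with $(p_k,q_k) \to (p,q)$ in $M$ may admit two inequivalent limit homotopy classes when the leaf through $p$ recurs near itself and there are topologically distinct short leafwise paths from $p$ to $q$. Already the irrational foliation on $T^2$ exhibits this phenomenon, and it also causes second countability to fail when the leaf space meets a transversal in uncountably many components. These pathologies are intrinsic to the monodromy groupoid; the atlas above accommodates them precisely because it is built one arrow at a time, without presupposing any global separation property.
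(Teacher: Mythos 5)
The paper does not prove this lemma at all: it is quoted from Moerdijk--Mr\v{c}un [MM], and your chart construction (chains of foliation flowboxes along a representative path, a chart diffeomorphic to $U_0\times I_n$ of dimension $\dim(M)+\dim(\SY)$, structure maps read off from concatenation of chains) is exactly the standard argument from that reference. The core of your proposal is correct. One phrasing slip: you cannot arrange that ``each overlap $U_i\cap U_{i+1}$ sits inside a single plaque of both sides'' --- that would force the overlap to be non-open; what you want is that the overlap is coherent, i.e.\ each plaque of $U_i$ meets at most one plaque of $U_{i+1}$, and that the chosen path crosses it in a single plaque. This does not affect the argument.

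Your last paragraph, however, contains a genuine conceptual error. Convergence in $\Mon(M,\SY)$ is \emph{not} convergence of the endpoint pairs $(p_k,q_k)$ in $M\times M$: an arrow close to $(p,q,\alpha)$ must, by definition of the chart topology, admit a representative running through the \emph{same} chain of plaques as $\tilde\alpha$. Recurrence of a leaf near itself therefore does not create non-separable points --- the two ``topologically distinct short leafwise paths'' you describe lie in disjoint chart neighbourhoods, precisely because the flowbox topology is finer than the subspace topology from $M\times M$. Your proposed example is in fact a counterexample to your claim: for the irrational (Kronecker) foliation of $T^2$ all leaves are simply connected and non-periodic, the flow identifies $\Mon(T^2,\SY)$ with the action groupoid $T^2\rtimes\R\cong T^2\times\R$, and this is a Hausdorff, second-countable Lie groupoid. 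Genuine non-Hausdorffness of $\Mon$ arises instead from vanishing-cycle phenomena: a leafwise loop $\alpha$ based at $p$ that is homotopically non-trivial in its leaf but whose transports to (one-sided) nearby leaves are null-homotopic, so that the two arrows $(p,p,[\text{const}])$ and $(p,p,\alpha)$ cannot be separated. Since the lemma only asserts that $\Mon$ \emph{may} fail to be Hausdorff or second countable, this error does not invalidate your proof of the statement, but the mechanism you describe is wrong and, taken seriously, would contradict your own (correct) claim that the charts are smoothly compatible.
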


The linear holonomy of $(p,q,\alpha)$ in $\Mon(M,\SY)$ is the identification of the normal fiber $(TM/\SY)_p$ with $(TM/\SY)_q$ provided by parallel transport along $\alpha$ using $\SY$. Globally, this translates into the action $M \acts_M \Mon(M,\SY)$ lifting to an action $\PP(TM/\SY) \acts_M \Mon(M,\SY)$ on the projective normal bundle, which is projective linear between fibres.

Let us focus on the Engel structure $\SD$. In this case $M$ is $4$--dimensional and $\SY \subset \SD$ is a line field. Over each point $p \in M$ the Engel structure determines a point $\PP(\SD/\SY)_p \subset \PP(TM/\SY)_p$. This line can be transported using the action $\PP(TM/\SY) \acts_M \Mon(M,\SY)$ described previously:
$$\gamma_\SY(\SD)_p: s^{-1}(p) \longrightarrow \PP(TM/\SY)_p,\qquad \gamma_\SY(\SD)_p(p,q,\alpha) = (q,p,\alpha^{-1}) \bullet \PP(\SD/\SY)_q.$$
Note that the domain $s^{-1}(p)$ of the curve $\gamma_\SY(\SD)_p$ is diffeomorphic to $\R$.

\begin{definition} \label{def:developmentMap}
The smooth map $\gamma_\SY(\SD): \Mon(M,\SY) \longrightarrow \PP(TM/\SY)$ obtained by glueing the collection of all maps $\gamma_\SY(\SD)_p$ is called the \textbf{development map} of $\SD$ along $\SY$.
\end{definition}
In Subsection \ref{ssec:EngelCharacterisation} we explained how the 2-plane field $\SD$ can be described as a family of curves in $\R\PP^2$. The development map provides an intrinsic description of the same phenomenon. By construction, the map $\gamma_\SY(\SD)$ is equivariant for the action $\PP(TM/\SY) \acts_M \Mon(M,\SY)$:
\[ \gamma_\SY(\SD)(p,q,\alpha) = (p',p,\alpha') \bullet \gamma_\SY(\SD)((p',p,\alpha') . (p,q,\alpha)). \]
That is, the curve $\gamma_\SY(\SD)_p \subset \PP(TM/\SY)_p$ is obtained from the curve $\gamma_\SY(\SD)_{p'} \subset \PP(TM/\SY)_{p'}$ using the linear holonomy identification between the two spaces. The first condition (A) in Proposition \ref{prop:EngelCharacterisation} implies the following:

\begin{lemma} \label{lem:EngelCharacterisation}
The module $[\SD,\SD]$ is a $3$-distribution if and only if each curve $\gamma_\SY(\SD)_p$ is an immersion. Furthermore, if the curves $\gamma_\SY(\SD)_p$ have no inflection points, $\SD$ is an Engel structure. \hfill$\Box$
\end{lemma}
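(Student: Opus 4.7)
The plan is to reduce the statement to Proposition \ref{prop:EngelCharacterisation} by identifying, in $\SY$--adapted flowboxes, the development map with the family of projective curves $X_p$ of Subsection \ref{ssec:EngelCharacterisation}. Both assertions are pointwise in $M$, so it suffices to produce this identification near an arbitrary point.

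First I would pick $q \in M$ and a flowbox chart $\phi: \D^3 \times [0,1] \longrightarrow \Op(q)$ adapted to $\SY$, i.e.\ with $\phi_\ast \partial_t = \SY$. Fixing a framing of $T\D^3$ and pulling it back through $d\pi$ trivialises $\PP(TM/\SY)$ over the flowbox as $\D^3 \times [0,1] \times \R\PP^2$. The leaves of $\SY$ are the segments $\{p\}\times[0,1]$, so the monodromy groupoid restricts to a product; by construction of the framing the linear $\SY$--holonomy between any two points of a leaf is the identity in these coordinates.

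Next I would write $\SD = \langle \partial_t, X(t) \rangle$ for a vector field $X$ tangent to the slices, which is possible because $\SY \subset \SD$. Unwinding Definition \ref{def:developmentMap}, the point $\gamma_\SY(\SD)_{(p,0)}((p,0),(p,t),\alpha)$ equals $((p,t),(p,0),\alpha^{-1}) \bullet \PP(\SD/\SY)_{(p,t)}$, which under the trivialisation is just $d_{(p,t)}\pi([X(p,t)]) = X_p(t) \in \R\PP^2$. Therefore $\gamma_\SY(\SD)_{(p,0)}$ coincides, up to reparametrisation of its domain, with the curve $X_p$ from Subsection \ref{ssec:EngelCharacterisation}, and the two properties of being immersed or having an inflection point transport across this identification.

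With this dictionary in hand both assertions follow from Proposition \ref{prop:EngelCharacterisation}: part of its content is that $\SD$ is non-integrable at $(p,t)$ exactly when $X_p$, and hence $\gamma_\SY(\SD)_{(p,0)}$, is immersed at $t$, proving the first equivalence; and if in addition each $\gamma_\SY(\SD)_p$ is free of inflection points, condition (A) of the same proposition guarantees that $\SD$ is Engel. The only mildly delicate step — which I take to be the main point to verify carefully — is compatibility of the framing-induced trivialisation of $\PP(TM/\SY)$ with $\SY$--holonomy; this is immediate in product coordinates but deserves a line of justification. One should also note that the converse to the second implication genuinely fails, since condition (B) of Proposition \ref{prop:EngelCharacterisation} yields Engel structures whose development curves may have inflections, which is why the lemma is correctly phrased as a one--way implication.
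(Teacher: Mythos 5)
Your proposal is correct and follows exactly the route the paper intends: the lemma is stated there as an immediate consequence of condition (A) of Proposition \ref{prop:EngelCharacterisation}, and your flowbox identification of the development curves with the curves $X_p$ (using that the linear $\SY$--holonomy is the identity in product coordinates, and is projective linear in general so that immersedness and inflection points are intrinsic) is precisely the verification the paper leaves implicit. Your closing remark that the second implication is genuinely one--way, because of condition (B), is also accurate.
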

In Definition \ref{def:wiggle} we defined a wiggle as an immersion of the interval into $\R\PP^2$ which closes up to a smooth, convex, embedded curve. As such, wiggles are defined in terms of self-intersections and tangencies, implying that being a wiggle is well-defined up to projective transformations of $\R\PP^2$. This allows us to speak of intervals of $\gamma_\SY(\SD)_p$ being wiggles in an intrinsic manner, without referring to any particular identification of $\PP(TM/\SY)_p$ with $\R\PP^2$. This fact will be important for our arguments.

\begin{remark}
Definition \ref{def:developmentMap} recovers the notion introduced by Montgomery in \cite{Mon}. Indeed, if $\SD$ is Engel and $\SY$ is the kernel $\SW$, its linearized holonomy preserves the planes $(\SE/\SW)_p$. In consequence, the monodromy groupoid $\Mon(M,\SY)$ acts on the projectivized bundle $\PP(\SE/\SW)$. Since $\SD \subset \SE$, the development map takes values in $\PP(\SE/\SW)$ and therefore the Engel condition (B) in Proposition \ref{prop:EngelCharacterisation} implies that each curve $\gamma_\SW(\SD)_p$ is an immersion.\hfill$\blacksquare$
\end{remark}

\section{$h$-principle for loose Engel families} \label{sec:main}

In the theory of $h$-principles \cite{EM,Gr86} there is particular value in finding the correct subclass of structures adhering to an $h$-principle \cite{BEM,CP,El89}. In the present paper, the $h$-principle is a consequence of the flexibility provided by a global dynamical property called \emph{looseness}. This notion is given in Definition \ref{def:loose}, Subsection \ref{ssec:mainDef}.

Then we prove the existence Theorem \ref{thm:main1} (Subsection \ref{ssec:existence}) and the uniqueness Theorem \ref{thm:main2} (Subsection \ref{ssec:uniqueness}). Bringing the two of them together allows us to deduce Theorem \ref{thm:main} (Subsection \ref{ssec:main}).

\subsection{Loose Engel Structures} \label{ssec:mainDef}

Lemma \ref{lem:convexity} implies that adding enough loops to an immersed curve in $\R\PP^2$ makes it convex (after a suitable modification in-between the cutting points). In line with other $h$-principles \cite{BEM,Gr86}, once the curve is convex and a loop is added, arbitrarily many new loops can be introduced while preserving convexity. These two phenomena have direct implications in the world of Engel structures.

First, given a $2$-plane distribution in a smooth $4$-manifold $M$, we can make it Engel by adding sufficiently many wiggles to its development map, proceeding carefully over a covering of $M$. Secondly, if convexity has been achieved and there are enough wiggles available, we can add arbitrarily many more while keeping the development map convex. These are the two main ingredients to prove a relative $h$--principle for this particular class of Engel structures.

The precise definition of this subclass can be detailed as follows. Let $M$ be a smooth $4$-manifold, $K$ a compact CW--complex, and $N$ a positive integer. Consider a continuous family of Engel structures $\SD: K \longrightarrow \Engel(M)$ and line fields $(\SY(k))_{k \in K}$ with $\SY(k) \subset \SD(k)$. Let $\gamma_{\SY(k)}(\SD(k))$ denote the corresponding development maps.

\begin{definition} \label{def:loose}
A family of Engel structures $\SD$ is \textbf{$N$--loose} with \textbf{certificate} $\SY$ if:
\begin{itemize}
\item[-] the development curves $\gamma_{\SY(k)}(\SD(k))_p$ are convex and
\item[-] for each $k \in K$ and $p \in M$, there is a segment $\gamma \subset \gamma_{\SY(k)}(\SD(k))_p$ containing $N$ wiggles that projects to an embedded curve $t(\gamma)$ under the target map.
\end{itemize}

The family $\SD$ is said to be \textbf{$\infty$--loose} if this holds for every positive $N$.
\end{definition}
The convexity condition for the development map implies that the line field $\SY(k)$ is always transverse to the kernel of the Engel structure $\SD(k)$. The embedding condition for the segment $\gamma$ implies that $\infty$--looseness can only hold for line fields $\SY(k)$ with no closed orbits; see Proposition \ref{prop:PPP}.

\begin{remark}
In \cite{Sal}, N. Saldanha describes the homotopy type of the space of convex curves in $\NS^2$. He shows that convex curves behave flexibly as soon as a loop is introduced. He called such families of curves \emph{loose}. We have decided to name our flexible families of Engel structures accordingly. The geometric intuition we have is that the flexibility of loose Engel structures is a manifestation of the flexibility displayed by convex curves. \hfill$\blacksquare$
\end{remark}

\begin{remark}
In \cite[Lemma 4.1]{Sal} it is proven that certain bounds on the total curvature imply that a convex curve has a wiggle (up to homotopy through convex curves). In Definition \ref{def:loose} we introduce looseness using wiggles, but one could define it instead by requiring that the development map has sufficiently large total curvature. This would in fact yield a larger class of Engel structures that would, nonetheless, be weakly homotopy equivalent to the one presented here. \hfill$\blacksquare$
\end{remark}

\subsection{Convex shells}

In our proof of Theorem \ref{thm:main1} we first upgrade M.~Gromov's Engel $h$-principle for open manifolds \cite{Gr86} to a quantitative statement. This is the content of Proposition \ref{prop:reduction}. This effectively reduces the proof to a extension problem for Engel germs in $\partial\D^4$ to the interior of $\D^4$. Following the geometric setup explained in Subsection \ref{ssec:EngelCharacterisation}, we introduce the following

\begin{definition}
A {\bf convex shell} is a $2$--distribution $\SD = \langle \partial_t,X\rangle$ in $\D^3 \times [0,1]$ such that the curves $X_p$ are immersed for all $p$ and convex at time $t$ whenever $(p,t) \in \Op(\partial(\D^3\times [0,1]))$.

In particular, $\SD$ is everywhere non--integrable and defines a germ of Engel structure along the boundary. A convex shell is said to be {\bf solid} if $\SD$ is everywhere Engel.
\end{definition}

The quantitative version reads as follows. Let $N$ be a positive integer. A convex shell is {\bf $N$--convex} if there exist:
\begin{itemize}
\item[-]a constant $\varepsilon\in (0,1)$,
\item[-]functions $(t_i: \D^3 \longrightarrow (0,\varepsilon))_{i=1,\dots,n}$ with $0<t_1(p)< \cdots < t_n(p)< \varepsilon$, and
\item[-]a $\D^3$-family of convex curves $(f_p: [0,\varepsilon] \longrightarrow \R\PP^2)_{p \in \D^3}$ such that $X_p = f_p^{[1\# t_1(p),\dots,1\# t_N(p)]}.$
\end{itemize}
The definition of parametric families of $N$--convex shells is given by the natural extension to higher-dimensional families of curves.

\subsection{Existence of loose Engel families} \label{ssec:existence}

In this subsection we solve the parametric extension problem for convex shells. We will prove the following version of Theorem \ref{thm:main1}:
\begin{proposition} \label{prop:main1}
Let $M$ be a smooth $4$-manifold, $K$ a compact CW--complex, and $N$ a positive integer. Then, any family $\SD_0: K \longrightarrow \FEngel(M)$ is formally homotopic to an $N$--loose family $\SD_1$.

Fix a family of line fields $\SY = (\SY(k))_{k \in K}$ with $\SY(k) \subset \SD_0(k)$ transverse to the kernel $\SW_0(k)$. Then, $\SD_1$ can be assumed to have $\SY$ as its certificate of $N$-looseness. Additionally, the constant $N$ can be taken to be $\infty$ if the $\SY(k)$ have no closed orbits.
\end{proposition}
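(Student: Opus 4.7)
The plan is to first invoke Proposition \ref{prop:reduction}, which upgrades M.~Gromov's Engel $h$-principle for open manifolds to a quantitative statement and reduces the proposition to a parametric extension problem: fill in an $N$-convex shell on $\D^3 \times [0,1]$ to a solid one, in a $K$-parametrised fashion. Choose flowbox coordinates on each handle so that $\SY = \langle \partial_t \rangle$; then the local development curves along $\SY$ coincide with the $\R\PP^2$-family of curves of Subsection \ref{ssec:EngelCharacterisation}, and by Proposition \ref{prop:EngelCharacterisation}(A) solving the extension amounts to producing, relative to the boundary germ, a homotopy of immersed $\R\PP^2$-curves ending at everywhere convex curves.

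The key step is that such a homotopy is supplied by Proposition \ref{prop:slidingWiggles}: apply it to the shell's $(K \times \D^3)$-family of curves $X_p$, with $A$ the locus where $X_p$ is already convex and $F^{[1\#a]}$ built from the shell's preinserted wiggles. The output is an immersion homotopy ending at a parametric family of everywhere convex curves with wiggles distributed evenly along the interior of $[0,1]$, agreeing with the shell near the boundary of the handle. Gluing the local solutions over a handle decomposition of $M$ adapted to $\SY$ is performed by induction on the skeleta of $M$ and $K$, absorbing already-Engel handles into the locus $A$ at each step, so that the newly constructed homotopy stays relative to the previously constructed Engel region. This yields a family $\SD_1$ containing $\SY$, formally homotopic to $\SD_0$, and with convex development curves, verifying the first bullet of Definition \ref{def:loose} via Lemma \ref{lem:EngelCharacterisation}.

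For the second bullet, recall from Proposition \ref{prop:slidingWiggles} that the wiggles produced have spacing and radius $O(1/s)$. Inside each flowbox the arc $\{p\} \times [0,1]$ is automatically embedded in $M$; taking $s$ large enough then fits $N$ consecutive wiggles inside this embedded arc, which translates (after identifying the intrinsic development map with the coordinate description) into $N$-looseness with certificate $\SY$. For the $\infty$-loose refinement, use that every leaf of a line field without closed orbits is exhausted by an increasing sequence of embedded arcs of unbounded length; iterating the local construction along such an exhaustion while sending $s \to \infty$ produces a development map whose wiggle count on each embedded initial leaf segment is unbounded.

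The main obstacle is the uniform bookkeeping of wiggle counts: the number of wiggles pre-installed by Proposition \ref{prop:reduction}, the number produced by Proposition \ref{prop:slidingWiggles}, and the number demanded by Definition \ref{def:loose} must be controlled uniformly in $k \in K$ and $p \in M$, and compatibility across flowbox overlaps must be maintained along the inductive gluing. The compactness of $K$ and $M$ makes this accounting possible, while the no-closed-orbit assumption on $\SY$ is precisely what removes the upper bound on embedded leaf segments needed for the $\infty$-loose case.
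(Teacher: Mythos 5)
Your route for the finite-$N$ statement is essentially the paper's: reduce via Proposition \ref{prop:reduction} to a collection of $N$--convex shells covering the top cells, then solve the extension problem on each shell with Proposition \ref{prop:slidingWiggles} (this is exactly Proposition \ref{prop:extension}), and conclude $N$--looseness because every orbit of $\SY$ must cross one of the shells and the arc $\{p\}\times[0,1]$ in a flowbox is embedded. The bookkeeping concern you raise at the end is handled in the paper precisely as you suggest, by inserting arbitrarily many wiggles along the lower-dimensional skeleta and exploiting that wiggles are intrinsic to the development map, so they can be counted consistently across overlapping flowboxes.

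The one genuine problem is your treatment of the $\infty$--loose case. You propose ``iterating the local construction along such an exhaustion while sending $s \to \infty$'', which describes an infinite sequence of modifications; this cannot terminate in a single smooth Engel structure (the wiggle density, hence the curvature of the development curves, would be unbounded on compact sets). No further construction is needed: the structure already built for finite $N$ does the job. Since $\SY(k)$ has no closed orbits, every orbit accumulates and therefore re-enters the \emph{finite} collection of solid convex shells infinitely many times; each pass contributes wiggles, and because any compact sub-arc of a non-closed orbit is embedded, arbitrarily long embedded segments of the development curve carry arbitrarily many wiggles. You have the right ingredient (exhaustion of a non-closed leaf by embedded arcs of unbounded length) but attach it to the wrong mechanism; replace the limiting iteration by this recurrence observation and the argument closes.
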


Proposition \ref{prop:main1} is proven in two stages, following the structure in $h$--principles of \emph{reducing} to a standard model and then \emph{extending} the boundary germ to the interior. The first step is achieved in the following
\begin{proposition} \label{prop:reduction}
Let $M$ be a smooth $4$-manifold, $K$ a compact CW--complex, and $N$ a positive integer. Consider a family of formal Engel structures $\SD_0: K \longrightarrow \FEngel(M)$ and line fields $\SY(k) \subset \SD_0(k)$ transverse to the formal kernel $\SW_0(k)$.

Then, there exists a collection of disjoint balls $\{B_i\}_{i\in I}\subset M \times K$ and a homotopy
$$ \SD_s: K \longrightarrow \FEngel(M), \qquad s \in [0,1],$$
such that $\SY(k) \subset \SD_s(k)$ is transverse to the formal kernel $\SW_s(k)$, and
\begin{itemize}
\item[-] for each $i\in I$, $\SD_1|_{B_i}$ is a $\D^{\dim(K)}$--family of $N$--convex shells with respect to $\SY$, 
\item[-] $\SD_1(k)$ is Engel in $p \in M$ if $(p,k)\in (M\times K)\setminus \bigcup_{i\in I} B_i$.
\end{itemize}
\end{proposition}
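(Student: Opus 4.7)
The strategy is to reduce the problem, via the flowbox description of Subsection \ref{ssec:EngelCharacterisation}, to the parametric $h$-principle for immersed curves in $\R\PP^2$, and then use the wiggle-insertion machinery of Subsection \ref{ssec:convex} to produce the prescribed $N$-convex shells. Since $\SY(k)\subset\SD_0(k)$ is transverse to $\SW_0(k)$, each point of $M\times K$ lies in a flowbox $U_\alpha\cong \D^3\times[0,1]\times\D^{\dim K}$ adapted to $\SY=\partial_t$, and in such a chart the formal Engel structure is encoded by a family of formal curves $X_{(p,k)}:[0,1]\to\R\PP^2$. Lemma \ref{lem:EngelCharacterisation} then tells us that $\SD_s$ is Engel exactly where the corresponding curves are immersed and convex, so every step of the construction can be phrased as a modification of the family $X_{(p,k)}$.

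Working over a fine triangulation of $M\times K$ subordinate to such a cover $\{U_\alpha\}$, I would proceed cell by cell. On each flowbox in turn, the Hirsch--Smale $h$-principle for immersions, applied parametrically in $(p,k)$ and relative to a collar of $\partial U_\alpha$, deforms the family $X_{(p,k)}$ of formal curves to a family of genuine immersions; via the flowbox identification this amounts to a homotopy of formal Engel structures that preserves $\SY\subset\SD_s$ and the transversality to $\SW_s$ and, at time $s=1$, makes $\SD_1$ non-integrable on a slightly shrunk flowbox. I would then pick wiggle-insertion points $t_1(p,k),\ldots,t_m(p,k)\in[0,1]$ depending smoothly on $(p,k)$ and distributed sufficiently densely, add $N$ wiggles at each insertion site, and apply the parametric version of Proposition \ref{prop:slidingWiggles} to spread convexity over the complement of arbitrarily small neighborhoods of these sites. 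By construction, on each such neighborhood the resulting curves have the form $f_{(p,k)}^{[1\#t_1,\ldots,1\#t_N]}$ with $f_{(p,k)}$ convex, so $\SD_1$ restricts to a $\D^{\dim K}$-family of $N$-convex shells on a ball $B_i\subset M\times K$. Outside $\bigcup_i B_i$ the curves are convex and Lemma \ref{lem:EngelCharacterisation} yields the Engel condition.

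The global argument then assembles these local constructions inductively over the skeleton of the triangulation of $M\times K$, each step performed relative to the data already fixed by means of cut-offs in a collar. The main technical obstacle is the coherent parametric bookkeeping on cell overlaps and with respect to the $K$-parameter: the flowbox identifications, the Hirsch--Smale homotopies, and the wiggle insertions must all be compatible across overlapping cells and across $K$, and the balls $B_i$ produced in neighboring cells must end up genuinely pairwise disjoint. This is handled in standard Gromov-style fashion, using the relative versions of Lemma \ref{lem:convexity} and Proposition \ref{prop:slidingWiggles} to localise modifications strictly inside individual cells and to shrink wiggle supports as needed, so that the final $\SD_1$ has the required structure with a finite, disjoint collection $\{B_i\}$ of $N$-convex shells in its complement of Engel behavior.
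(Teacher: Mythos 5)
Your overall strategy (flowboxes adapted to $\SY$, Smale--Hirsch to get immersed development curves, wiggle insertion plus Proposition \ref{prop:slidingWiggles}, induction over a triangulation of $M\times K$) is the same as the paper's, and the first half of your argument essentially reproduces Lemma \ref{lem:nonIntegrable}. The gap is in the global assembly, specifically in what the balls $B_i$ are and where the non--Engel locus survives. You take the $B_i$ to be small neighbourhoods of the wiggle--insertion sites and assert that convexity is achieved on their complement; but on such a neighbourhood the curve is a convex curve with convex loops inserted, hence convex, hence Engel there as well, so your construction would leave no non--Engel locus anywhere and would prove Theorem \ref{thm:main1} with no extension step. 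That cannot be arranged in one pass: when a flowbox is modified relative to its boundary and to previously treated flowboxes, the development curves are pinned near $\partial I$ and near the lateral boundary to data that is merely immersed, and convexity can only be achieved in the interior region $\{|p|,|k|\le 1-\varepsilon\}$ away from $\partial I$ (the paper explicitly warns that the behaviour near $\partial I$ remains complicated). Moreover an $N$--convex shell is by definition an entire flowbox $\D^3\times[0,1]$ --- immersed everywhere, convex only near its boundary, with the $N$ reserved wiggles near $t=0$ --- not a small ball around an insertion point.

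The paper's resolution, which is the structural content of the proposition and is absent from your write-up, is to run the induction only over the cells of dimension $<\dim(M\times K)$ and to take the $B_i$ to be the shrunk top--dimensional simplices: these are automatically pairwise disjoint, each is a $\SY$--flowbox, the wiggles deposited along the codimension--one faces (which every $\SY$--flowline through a top cell must cross) supply the $N$ reserved wiggles of the shell, and the interiors of the top cells are exactly where the Engel condition is allowed to fail until Proposition \ref{prop:extension} is applied. Relatedly, your appeal to ``standard Gromov-style'' bookkeeping on overlaps hides the one mechanism that makes the induction relative to previous flowboxes work: wiggles are intrinsic to the development map $\gamma_\SY(\SD)$, so one can deposit enough of them in earlier steps to apply Proposition \ref{prop:slidingWiggles} relative to the already--treated curves. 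You should also record that the formal isomorphisms (\ref{eq:iso1}) and (\ref{eq:iso2}) are what orient the maximal circles and $\R\PP^2$, so that ``convex'' has a consistent meaning across the cover; without this the cell-by-cell convexifications need not be compatible.
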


We will prove Proposition \ref{prop:reduction} by using the following auxiliary Lemma:
\begin{lemma} \label{lem:nonIntegrable}
Let $M$ be a smooth $4$-manifold, $K$ a compact CW--complex. Consider a family of formal Engel structures $\SD_0: K \longrightarrow \FEngel(M)$ and line fields $\SY(k) \subset \SD_0(k)$ transverse to the formal kernel $\SW_0(k)$.

Then, there is a homotopy
$$ \SD_s: K \longrightarrow \FEngel(M), \qquad s \in [0,1] $$
such that $\SY(k) \subset \SD_s(k)$ is transverse to the formal kernel $\SW_s(k)$ and the formal even--contact structure $\SE_1(k)$ is given by $[\SD_1(k),\SD_1(k)]$.
\end{lemma}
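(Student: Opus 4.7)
The approach is to pass to $\SY$-flowboxes, apply the Hirsch--Smale h-principle for immersions of $1$-manifolds into $\R\PP^2$ in order to make $\SD_0$ non-integrable, and then adjust the remaining formal flag so that the formal $\SE_1$ coincides with the honest bracket $[\SD_1,\SD_1]$.

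First, by compactness of $K$ and $M$, fix a finite open cover $\{U_\alpha\}$ of $M$ equipped, continuously in $k\in K$, with flowbox diffeomorphisms $U_\alpha\cong\D^3\times[0,1]$ sending $\SY(k)$ to $\langle\partial_t\rangle$. Using that $\SY$ is transverse to $\SW_0$, split $\SD_0(k)=\langle\partial_t,X_0(k,\cdot)\rangle$ with $X_0$ tangent to the slices $\D^3\times\{t\}$, and $\SE_0(k)=\langle\partial_t,X_0,V_0\rangle$ for some auxiliary vertical vector field $V_0$. As in Subsection \ref{ssec:EngelCharacterisation}, $\SD_0$ is non-integrable at $(k,p,t)$ precisely when the projectivised curve $X_{0,k,p}:t\mapsto[X_0(k,p,t)]\in\R\PP^2$ is immersed there, with velocity realised by $[\partial_t,X_0]\bmod\SD_0$. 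The formal isomorphism~(\ref{eq:iso1}), $\det(\SD_0)\cong\SE_0/\SD_0$, supplies a canonical formal velocity proportional to the class of $V_0$, so $(X_{0,k,p},V_0)$ defines a $(K\times\D^3)$-parametrised formal immersion of $[0,1]$ into $\R\PP^2$.

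Second, the parametric and relative Hirsch--Smale h-principle for immersions, i.e.\ the weak equivalence $\FImm(I)\simeq\Imm(I)$ recalled in Subsection \ref{ssec:convex}, lets us homotope the formal family $(X_{0,k,p},V_0)$ through formal immersions --- relative to any closed subset where it is already honestly immersed, and $C^0$-close to it --- to an honest family of immersions $X_{1,k,p}$. Lifting back to the flowbox by keeping the $\partial_t$-component untouched, this produces a homotopy $\SD_s=\langle\partial_t,X_s\rangle$ supported in $U_\alpha$ with $\SD_1|_{U_\alpha}$ non-integrable and $\SY\subset\SD_s$ throughout. Iterating over the finite cover and using that non-integrability is an open condition, we can treat each subsequent $U_\alpha$ relatively to a neighbourhood of the region on which non-integrability has already been achieved. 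After finitely many steps, we obtain a family $\SD_s$ with $\SD_1$ non-integrable on all of $M$.

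Third, we update the remaining formal data. Along the way we transport $\SE_s$ as a $3$-plane containing $\SD_s$ and $\SW_s$ as a line field in $\SE_s$ transverse to $\SY$; both are sections of fibrewise non-empty, path-connected spaces, so extensions of their initial values exist. To finish, concatenate a final short homotopy interpolating the formal $\SE_1$ to the honest bracket $\SE_1^\star:=[\SD_1,\SD_1]$: both contain $\SD_1$ and therefore define fibrewise points of $\PP(TM/\SD_1)\cong\R\PP^1$, along which we interpolate by the short geodesic arc after a generic small perturbation avoiding antipodal pairs; $\SW_s$ is slid within $\SE_s$ to the even-contact kernel of $\SE_1^\star$, and the bundle isomorphisms~(\ref{eq:iso1})--(\ref{eq:iso2}) are propagated by continuity. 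The main obstacle throughout is the bookkeeping needed to preserve the flag $\SY\subset\SD_s\subset\SE_s$ together with the transversality of $\SW_s$ to $\SY$ at every intermediate $s$; once the immersion h-principle has been applied chart by chart, this reduces to checking connectedness of certain fibrewise spaces of flags, which is routine.
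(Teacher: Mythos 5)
Your overall route is the same as the paper's: cover $M\times K$ by $\SY$--flowboxes, read $\SD_0$ inside each one as a parametric family of formal immersions of an interval into $\R\PP^2$ (with the formal velocity supplied by $\SE_0/\SD_0$ via the isomorphism (\ref{eq:iso1})), apply the relative parametric Smale--Hirsch theorem, and observe that the resulting bracket $[\SD_1,\SD_1]$ is homotopic to $\SE_0$. Two points, however, deserve attention. First, the gluing: the paper does not use a generic finite cover but the flowboxes of \cite[Proposition 29]{CPPP} adapted to a triangulation in general position with respect to $\SY$, which guarantee that every $\SY$--interval of a flowbox is either entirely contained in or disjoint from the flowboxes of lower-dimensional simplices. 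With your generic cover, a $\SY$--segment of a later chart can enter and leave an earlier chart in a closed subset of its domain that varies with the parameter and may be disconnected; the relative Smale--Hirsch theorem still applies in principle, but the phrase ``openness of non-integrability'' hides exactly the bookkeeping that the adapted covering is designed to eliminate. This is a presentational gap rather than a fatal one.

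The genuine error is in your final interpolation. You propose to connect the formal $\SE_1$ to $\SE_1^\star=[\SD_1,\SD_1]$, viewed as two sections of the $\R\PP^1$--bundle $\PP(TM/\SD_1)$, ``by the short geodesic arc after a generic small perturbation avoiding antipodal pairs.'' This cannot work: the locus where two sections of an $\R\PP^1$--bundle are at maximal distance is cut out by one real equation, hence is generically of codimension one in the $(4+\dim K)$--dimensional base and cannot be perturbed away; indeed, two sections of a circle bundle need not be fibrewise homotopic at all (the obstruction lives in a first cohomology group), and nowhere-antipodal sections are automatically homotopic, so your perturbation claim would prove something false in general. The correct homotopy is already in your hands from the second paragraph: the Smale--Hirsch theorem does not merely output immersions $X_{1}$, it outputs a homotopy of formal immersions from $(X_0,V_0)$ to $(X_1,\partial_t X_1)$, i.e.\ a path of non-vanishing sections of $TM/\SD$ joining the class of $V_0$ (which spans $\SE_0/\SD_0$) to the class of $[\partial_t,X_1]$ (which spans $[\SD_1,\SD_1]/\SD_1$). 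That path is the homotopy between $\SE_0$ and $[\SD_1,\SD_1]$; invoke it instead of the geodesic argument. (As a minor remark, there is no need to slide $\SW_s$ to the ``even-contact kernel'' of $\SE_1^\star$: a formal Engel structure only requires $\SW\subset\SD$ with the isomorphisms (\ref{eq:iso1})--(\ref{eq:iso2}), and transversality of $\SW_s$ to $\SY$ is preserved because the lines in a plane transverse to a fixed line form a contractible set.)
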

\begin{proof}
Let us assume first that $K$ is a compact manifold. Consider a triangulation $\ST$ of $M \times K$. Regard the family of line fields $\SY$ as a line field in $M\times K$. Then assume that the triangulation $\ST$ is in general position \cite{Th0,Th} with respect to $\SY$ and the foliation by fibres of $M \times K \longrightarrow K$. In particular, all lower dimensional simplices are transverse to the line field $\SY$.

Now, to each simplex $\sigma$ of $\ST$ we associate a $\SY$--flowbox $\SU(\sigma)$ such that
\begin{itemize}
\item[-] the set of all such flowboxes is a covering of $M \times K$,
\item[-] two flowboxes only intersect each other if one of the simplices is contained the other,
\item[-] if $\sigma$ is top dimensional, $\SU(\sigma)$ is obtained from $\sigma$ by a $C^0$--small shrinking,
\item[-] if $\sigma$ is not top dimensional, any $\SY$--interval in $\SU(\sigma)$ is either fully contained or completely disjoint from the flowboxes corresponding to subsimplices.
\end{itemize}
These $\SY$--flowboxes are constructed in \cite[Proposition 29]{CPPP}. In short, $\SU(\sigma)$ is obtained by shrinking $\sigma$ and then thickening in the directions complementary to $\sigma$. 

The 2-distribution $\SD_0$ can be modified over each $\SU(\sigma)$ inductively in the dimension of $\sigma$, relatively to previous steps. Let us denote by $\SE_0$ the $K$-family of $3$--distributions which is part of the formal data. Note that, over each flowbox, the Engel family $\SD_0$ can be regarded as a $\D^3 \times \D^{\dim(K)}$--family of formal immersions of the interval into the projective plane. Indeed, the $2$--distribution $\SD_0$ provides a family of curves into $\R\PP^2$ and the $3$--distribution $\SE_0$ provides a great circle at each point of the curves. The isomorphism $\det(\SD_0) \equiv \SE_0/\SD_0$ encoded in the formal data -- Equation (\ref{eq:iso1}) -- provides an orientation\footnote{The isomorphism $\det(\SD_0) \equiv \SE_0/\SD_0$ tells us that $\SE_0$ is canonically oriented globally. In each flowbox we make a choice of orientation for $\SY$, which automatically orients $\SE_0/\SY$. Its projectivisation is the great circle under consideration, which inherits an orientation.} of each great circle. Then, the relative nature of the Smale-Hirsch theorem \cite{EM,Gr86} implies that we can modify the curves so that they become immersions, relative to previous flowboxes. In terms of the formal Engel structure this means that $\SD_0$ is formally homotopic to a family of non--integrable plane fields that bracket-generate a $3$--distribution homotopic to $\SE_0$.

This proves the claim. For $K$ an arbitrary CW-complex, we proceed cell by cell as just explained, using again the fact that the Smale-Hirsch theorem is relative both in parameter and domain.
\end{proof}

\begin{proof}[Proof of Proposition \ref{prop:reduction}]
We use the setup explained in the proof of Lemma \ref{lem:nonIntegrable}: We may assume that $K$ is a compact manifold. We fix a triangulation $\ST$ of $M \times K$ in general position with respect to $\SY$ and the fibres of $M \times K \longrightarrow K$. This allows us to cover $M\times K$ by $\SY$--flowboxes. By the Lemma, we can assume that $[\SD_0,\SD_0]$ is the $3$--distribution $\SE_0$ given by the formal data.
 
Now we modify the 2-distribution $\SD_0$ over each flowbox $\SU(\sigma)$, inductively in the dimension of $\sigma$ for $\dim(\sigma) < \dim(M \times K)$. We regard the restriction $\SD_0|_{\SU(\sigma)}$ to each flowbox as a $\D^3 \times \D^{\dim(K)}$--family of immersions $X_{p,k}$ of the interval $I$ into $\R\PP^2$. The isomorphism $\det(\SE_0/\SW_0) \equiv TM/\SE_0$ provided by the formal data -- Equation (\ref{eq:iso2}) -- provides a local orientation\footnote{The isomorphism $\det(\SE_0/\SW_0) \equiv TM/\SE_0$ provides a canonical orientation for the bundle $TM/\SW_0$. In the flowbox we choose an auxiliary orientation for $\SY$. Since $\SY$ is contained in $\SD_0$ and transverse to $\SW_0$, we obtain an orientation of $TM/\SD_0$. This yields the local orientation of $\R\PP^2$.} of $\R\PP^2$; we want the curves $X_{p,k}$ to be convex with respect to this orientation.

In line with Proposition \ref{prop:slidingWiggles}, we first use Lemma \ref{lem:convexity} to add arbitrarily many wiggles to each $X_{p,k}$ close to the endpoints $\partial I$ and then we distribute them evenly in the interior $I \setminus \Op(\partial I)$. This is done parametrically in the band $\{1-\varepsilon \leq |p|,|k| \leq 1\}$, with $\varepsilon>0$ arbitrarily small. Hence, we can assume that $X_{p,k}$ is convex and has arbitrarily many wiggles away from its endpoints if $\{|p|,|k| \leq 1-\varepsilon\}$. Note that the behaviour of $X_{p,k}$ will be quite complicated close to $\partial I$.

This process is relative to the boundary of the flowbox and it can also be made relative to previous flowboxes: By assumption, the development map of the 2-distribution $\SD_0$ along a $\SY$--curve $X_{p,k}$ contained in a previous flowbox is already convex. Since the development map is intrinsically defined, we have a precise control of how many wiggles such a $X_{p,k}$ has. In particular, it can be assumed to be arbitrarily large by evenly introducing sufficiently many wiggles in the previous steps. Proposition \ref{prop:slidingWiggles} can be applied relative to the set of these curves. 

The argument can now be repeated until we reach the top dimensional cells. The collection of balls $\{B_i\}_{i\in I}$ in the statement of Proposition \ref{prop:reduction} is taken to be the collection of flowboxes $\SU(\sigma) \subset \sigma$ with $\sigma$ top dimensional. Since we have added arbitrarily many wiggles along the codimension-1 skeleton, the formal Engel structure is a genuine Engel structure in the boundary of each ball $B_i$, for all $i\in I$. In addition, each ball $B_i$ is a $\D^{\dim(K)}$--family of $N$--convex shells as required for the statement. Finally, observe that $\SY$ has remained fixed during this formal homotopy, which concludes the proof.
\end{proof}
Proposition \ref{prop:reduction} solves the reduction process for Proposition \ref{prop:main1}. Let us now address the extension problem.

Consider the $\D^{\dim(K)}$--families of shells $\{B_i\}_{i\in I}$ produced by Proposition \ref{prop:reduction}. Observe that the restriction of the 2-distribution $\SD_1|_{B_i}$ can be regarded as a $\D^3 \times \D^{\dim(k)}$--family of curves $X_{p,k}$ satisfying the hypothesis of Proposition \ref{prop:slidingWiggles}. Here $\D^3 \times \D^{\dim(k)}$ plays the role of $K$ and $A$ is its boundary. From this we deduce that there is a deformation, relative to the boundary of the model, that makes all curves convex. This implies that there is an Engel family $\SD_2$ that is formally homotopic to $\SD_1$. Additionally, $\SD_2|_{B_i}$ is a $(N-1)$--convex shell, since we only needed to use one of the wiggles during the homotopy. This argument proves the following
\begin{proposition} \label{prop:extension}
Let $K$ be a compact CW-complex. Any family $(\D^3 \times [0,1],\SD_k)_{k \in K}$ of $N$--convex shells is homotopic to a family of solid $(N-1)$--convex shells. This is relative to the boundary of the shells, and relative to the parameter region in which they are already solid.\hfill$\Box$
\end{proposition}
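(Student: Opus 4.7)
The plan is to translate the extension problem into a statement about families of immersed curves in $\R\PP^2$, via the characterization of Proposition \ref{prop:EngelCharacterisation}, and then invoke Proposition \ref{prop:slidingWiggles} almost verbatim, using one of the $N$ available wiggles to spread convexity across the entire shell while leaving $N-1$ wiggles intact.

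First, unpack the data. A $K$--family of $N$--convex shells on $\D^3 \times [0,1]$ is encoded by a family of immersed curves $X_{p,k}\colon [0,1] \to \R\PP^2$, parametrized by $(p,k) \in \D^3 \times K$, which are convex on a neighbourhood of the boundary of $\D^3 \times [0,1]$ and which near $\{t = 0\}$ are of the form $f_{p,k}^{[1\#t_1,\ldots,1\#t_N]}$ for some family of convex base curves $f_{p,k}$ and functions $0 < t_1(p,k) < \cdots < t_N(p,k) < \varepsilon$. Let $A \subset \D^3 \times K$ denote the union of $\partial(\D^3) \times K$ with the parameter region where the shell is already solid; on $A$ the curves $X_{p,k}$ are everywhere convex. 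Then apply Proposition \ref{prop:slidingWiggles} with parameter space $\D^3 \times K$, closed submanifold $A$, and cutting point $a := t_1(p,k)$ (one may flatten the dependence of $t_1$ on $(p,k)$ by a preliminary isotopy). The single--wiggle hypothesis is satisfied by the wiggle at $t_1$; the remaining wiggles $t_2, \ldots, t_N$ and the convex region near $t=1$ lie outside the support of the homotopy that the proposition produces.

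For $s$ large enough, Proposition \ref{prop:slidingWiggles} yields a family $X_{p,k,s}$ of everywhere--convex immersed curves agreeing with $X_{p,k}$ on $A$ and on neighbourhoods of $\{t = 0\} \cup \{t = 1\}$. By Lemma \ref{lem:EngelCharacterisation} this promotes to a solid Engel family $\SD_2$, and the entire deformation is through immersed curves, hence through $2$--distributions that are everywhere non--integrable and therefore through formal Engel structures with unchanged flag data. Relativity to $A$ and to the boundary of the shell is built into the statement of Proposition \ref{prop:slidingWiggles}, so the resulting Engel homotopy is relative to the regions required in the conclusion of Proposition \ref{prop:extension}.

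The main obstacle is the bookkeeping needed to exhibit $\SD_2$ as an $(N-1)$--convex shell. Proposition \ref{prop:slidingWiggles} distributes many additional small convex wiggles throughout $[t_1, 1-a]$ in order to achieve global convexity, so its output is not literally presented in the form ``convex base curve plus $(N-1)$ inserted wiggles''. However, since the slidingWiggles homotopy can be localized away from $t_2$, the wiggles at $t_2, \ldots, t_N$ and the convex segment $f_{p,k}|_{[t_N, \varepsilon]}$ remain intact, while the newly inserted wiggles sit on a curve that is convex everywhere and can therefore be absorbed into a new convex base curve $f'_{p,k}\colon [0, \varepsilon'] \to \R\PP^2$ after reparametrization. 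This exhibits $\SD_2$ as an $(N-1)$--convex shell with remembered insertion points $t_2, \ldots, t_N$, completing the proof.
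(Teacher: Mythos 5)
Your proposal is correct and follows essentially the same route as the paper: the paper also regards the family of shells as a $\D^3\times\D^{\dim(K)}$--family of curves $X_{p,k}$ satisfying the hypotheses of Proposition \ref{prop:slidingWiggles} (with the boundary and the already-solid region playing the role of $A$), applies that proposition to spread one wiggle into global convexity, and observes that only one of the $N$ wiggles is consumed, so the result is a family of solid $(N-1)$--convex shells. Your closing paragraph on re-exhibiting the output as an $(N-1)$--convex shell is a useful elaboration of a point the paper passes over with the phrase ``since we only needed to use one of the wiggles.''
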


\begin{proof}[Proof of Proposition \ref{prop:main1} and Theorem \ref{thm:main1}]
Consider the shells $B_i$ constructed in Proposition \ref{prop:reduction}. Since these are obtained by shrinking a top-dimensional simplex of the triangulation $\ST$ of $M \times K$, every orbit of $\SY$ must intersect some ball $B_i$ in the collection. An application of Proposition \ref{prop:extension} turns each $B_i$ into a solid $(N-1)$--convex shell, and therefore $\SD_2$ is $(N-1)$--loose. This proves Theorem \ref{thm:main1}.

Assume now that $\SY$ has no closed orbits. Then every orbit accumulates somewhere and therefore intersects one of the $B_i$ infinitely many times. Since wiggles are intrinsically defined using the development map, we deduce that each orbit of $\SY$ has infinitely many of them and therefore $\SD_1$ is $\infty$--loose.
\end{proof}

This concludes the existence $h$-principle for the class of $\infty$-loose Engel structures, as stated in Theorem \ref{thm:main}. In particular, we have an existence $h$-principle refining our previous result \cite{CPPP}, which we will now further improve to a uniqueness $h$-principle.

\subsection{Uniqueness of loose Engel families} \label{ssec:uniqueness}

In this subsection we show that the $N$-loose Engel families constructed in Theorem \ref{thm:main1} are unique up to homotopy if $N$ is large enough. This is precisely the content of Theorem \ref{thm:main2}; its quantitative nature is in line with other quantitative phenomena appearing in higher-dimensional contact flexibility \cite{BEM,CMP}. We have included a discussion on this in Section \ref{sec:app}.

Theorem \ref{thm:main2} will be proven by first showing that any loose family can be homotoped to resemble a family produced by Theorem \ref{thm:main1}. This is the content of the following
\begin{proposition} \label{prop:bootstrapping}
Let $M$ be a smooth $4$-manifold and $K$ a compact CW--complex. There exists a positive integer $N_0$, depending only on the dimension of $K$, such that for any:
\begin{itemize}
\item[-] $N$--loose family $\SD_0: K \longrightarrow \Engel(M)$, $N\geq N_0$, with certificate $\SY$,
\item[-] triangulation $\ST$ of $M \times K$ in general position with respect to $\SY$ and $M \times K \to K$,
\item[-] covering $\{\SU(\sigma)\}_{\sigma \in \ST}$ as in Proposition \ref{prop:reduction},
\item[-] non-negative integer $N_1$,
\end{itemize}
there is a homotopy $\SD_s: K \longrightarrow \Engel(M)$ satisfying
\begin{itemize}
\item[-] $\SY(k) \subset \SD_s(k)$ is transverse to the kernel $\SW_s(k)$,
\item[-] $\SD_s$ is $(N-N_0)$--loose for all $s$, with $\SY$ as its certificate of looseness,
\item[-] for any any top--dimensional simplex $\sigma \in \ST$, $\SD_1|_{\SU(\sigma)}$ is a family of solid $N_1$--convex shells.
\end{itemize}
\end{proposition}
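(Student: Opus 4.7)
The plan is to convert $\SD_0$ into the standard form prescribed by solid $N_1$--convex shells through a skeletal induction on $\ST$, using Little's homotopy (Lemma \ref{lem:convexity}) together with its parametric version (Proposition \ref{prop:slidingWiggles}). Because $\SD_0$ is $N$--loose with certificate $\SY$, every development curve $\gamma_{\SY(k)}(\SD_0(k))_p$ is convex and carries an embedded segment containing at least $N$ wiggles. Since the flowboxes $\{\SU(\sigma)\}_{\sigma \in \ST}$ are $\SY$--flowboxes whose $\SY$--orbits are either entirely contained in or disjoint from the flowbox of each subsimplex, the restriction $\SD_0|_{\SU(\sigma)}$ can be read intrinsically as a $\D^3 \times \D^{\dim(K)}$--family of convex curves into $\R\PP^2$, and the global wiggle count along any orbit can be tracked through the development map.

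The induction runs over the dimension $d$ of simplices $\sigma \in \ST$: at stage $d$ one produces a homotopy $\SD_s$, loose with certificate $\SY$ at every time, such that $\SD_1|_{\SU(\sigma)}$ already matches the standard $N_1$--convex shell model for every simplex of dimension $\leq d$ (modifications over lower--dimensional flowboxes acting as scaffolding for the top--dimensional ones). In the inductive step on a simplex $\tau$ of dimension $d+1$, the compatibility between $\SY$--flowboxes and the face relation ensures that $\SD$ is already in standard form near $\partial \SU(\tau)$; Little's homotopy then allows us to manufacture $N_1$ wiggles at the prescribed insertion times $t_1,\ldots,t_{N_1}$ from a single reserved wiggle, and a parametric sliding deformation in the style of Proposition \ref{prop:slidingWiggles} pushes them into place. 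The deformation is supported in $\SU(\tau)$, relative to its boundary and to all previously standardized cells, so the line field $\SY$ is preserved and remains transverse to the kernel.

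The principal obstacle, and the source of $N_0$, is bookkeeping: each modification inside $\SU(\sigma)$ consumes a bounded number of wiggles per $\SY$--orbit that crosses it, and although an orbit can traverse arbitrarily many flowboxes, the number of steps of the induction is bounded by the possible dimensions of simplices of $M \times K$. Hence the total wiggle loss per orbit is bounded by a constant $N_0$ depending only on $\dim(M \times K) = \dim(K)+4$, yielding the required $(N-N_0)$--looseness. The remaining delicate point is that the standardizations carried out in adjacent top--dimensional flowboxes must glue coherently along the shared boundary flowboxes of lower--dimensional simplices; this is achieved by executing each Little homotopy relative to the structure fixed at previous stages, which the relative form of Proposition \ref{prop:slidingWiggles} explicitly permits.
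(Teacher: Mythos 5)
There is a genuine gap, and it occurs at the very first step: you run the induction over the triangulation covering $\{\SU(\sigma)\}_{\sigma \in \ST}$ and assume that in each flowbox $\SU(\tau)$ you have ``a single reserved wiggle'' from which Little's homotopy can manufacture the $N_1$ wiggles. But $N$--looseness only guarantees that \emph{somewhere} along each $\SY$--orbit there is an embedded segment carrying $N$ wiggles; it says nothing about where that segment sits relative to the given triangulation. A flowbox $\SU(\sigma)$ may well be entirely contained in a wiggle--free portion of every orbit through it, in which case Lemma \ref{lem:convexity} gives you nothing to work with (recall that for convex curves the lemma requires $n_0>0$: one cannot create a first wiggle while staying convex). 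This is precisely why the paper does \emph{not} induct over $\{\SU(\sigma)\}$: it first builds an auxiliary covering by thickenings of the certifying segments themselves, so that every shell in the covering is solid and $N$--convex by construction, and then spreads wiggles until they are uniformly distributed everywhere; only \emph{after} that does the restriction to the triangulation flowboxes become a family of solid $N_1$--convex shells. A related omission is the continuity of the choice of seed wiggle over the $\D^3\times\D^{\dim K}$ base of each shell: a continuous section selecting a wiggle on each flowline need not exist for an arbitrary shell, and the paper has to refine the covering (the $\{\SU_i^j\}$ with small contractible orbit spaces) to obtain one. Your proposal does not address this.

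Your justification of the constant $N_0$ is also a non sequitur. You concede that an orbit can traverse arbitrarily many flowboxes but argue that the wiggle loss is bounded because the number of induction \emph{stages} is bounded by $\dim(M\times K)+1$. Wiggles are not consumed per stage; they are consumed near the boundary of each flowbox in which a homotopy is performed, so within a single stage an orbit crossing many flowboxes could in principle lose many wiggles. The correct mechanism is a general position argument in the orbit spaces: after perturbation, the projected boundaries $S_{i'}=\pi_i(\partial\SU_{i'}\cap\SU_i)$ intersect transversely, so a fixed flowline of a given shell meets the boundary regions of at most $C=\dim(K)+4$ earlier shells (plus at most $D$ neighbours coming from the refining triangulation of the orbit space), each costing two wiggles; this is what yields $N_0\geq 2C+2D+1$, with $C$ and $D$ depending only on $\dim(K)$. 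Without this transversality set-up the bound on the loss per orbit, and hence the $(N-N_0)$--looseness of the intermediate structures, is not established.
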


We will say that a family of Engel structures is simply \textbf{loose} if it is $N_0$--loose. During the proof we will provide a bound for the constant $N_0$. 

\begin{proof}
Since the Engel structure $\SD_0$ is $N$--loose, at any point $(p,k) \in M \times K$ we can find an embedded interval $\gamma \subset M \times \{k\}$ tangent to $\SY$, containing $(p,k)$, and whose development map has $N$ wiggles. By thickening such intervals, we find a covering $\{\SU_i\}$ of $M \times K$ by solid $N$--convex shells. It is sufficient for us to show that there is an Engel homotopy $(\SD_s)_{s \in [0,1]}$ through $(N-N_0)$--loose Engel structures such that the development map of $\SD_1$ has arbitrarily many uniformly distributed wiggles. This can be achieved by modifying the development map inductively over each element $\SU_i$ of the covering, as follows.

Start with the first shell $\SU_0$, where $\SD_0$ is considered as a family of convex intervals $(X_{p,k})_{(p,k)\in\D^3 \times \D^{\dim(k)}}$, and fix $\varepsilon>0$ arbitrarily small. Since we have $N$ wiggles, we can apply Proposition \ref{prop:slidingWiggles} to one of them to produce arbitrarily many more wiggles in the region $\{|p|,|k| \leq 1-\varepsilon\}$. These wiggles can be assumed to be uniformly distributed in the domain. Note that in doing this, the wiggle we chose in the region $\{1-\varepsilon \leq |p|\} \cup \{1-\varepsilon \leq |k|\}$ disappears as Little's homotopy is performed. In particular, $\SU_0$ is only a $(N-1)$--convex shell for the new Engel structure.

Consider now $\SU_1$ and suppose that it intersects $\SU_0$. From the perspective of $\SU_1$, the homotopy in $\SU_0$ could have destroyed two wiggles. Indeed, the wiggle we used for the homotopy in $\SU_0$ may intersect at most two wiggles in $\SU_1$ (see Remark \ref{rem:wiggle}). However, if we assume that $N>2$, there is at least one wiggle remaining and we can repeat the argument above. This allows us to arbitrarily increase the number of wiggles in the interior of $\SU_1$. It is natural to proceed by repeating this process inductively over the covering index $i$. In order to do that, denote the projection to the orbit space by $\pi:\SU_i \longrightarrow U_i= \SU_i/\SY$, where each $U_i$ is diffeomorphic to $\D^3 \times \D^{\dim(K)}$.

The main geometric ingredient in the proof is showing that the covering $\{\SU_i\}$ can be chosen such that:
\begin{enumerate}
\item[I.]  Only $(N_0-1)$ of the wiggles of a given shell $\SU_i$ get destroyed by previous homotopies.
\item[II.] There exists a continuous section $U_i \longrightarrow \SU_i$ that provides a marked wiggle in each flowline.
\end{enumerate}

For that, fix a cover $\{\SV_i\}$ using the process described in the first paragraph, and write
$$\pi_i: \SV_i \longrightarrow V_i = \SV_i/\SY$$
for the canonical projection. The intersection $\pi_i(\partial\SV_{i'} \bigcap \SV_i)$ defines a codimension--$1$ submanifold $S_{i'} \subset V_i$, and by a small perturbation of the flowboxes $\{\SV_{i'}\}$, we can assume that the submanifolds $\{S_{i'}\}_{i' \neq i}$ of $V_i$ intersect transversely. In particular, a point in $V_i$ may only lie in $C=\dim(K)+4$ different manifolds $S_{i'}$. In the previous steps ($i' < i$) of the induction, Little's homotopy destroyed two wiggles in each region $\Op(\partial\SV_{i'})$. Hence, by setting $N_0 \geq 2C+1$ it follows that each $\SY|_{\SV_i}$ flowline contains still one wiggle, so Condition (I) holds. 

In order to show that the wiggles can be chosen in a continuous way (Condition (II)), we construct a finer covering $\{\SU_i^j\}$ of $M \times K$. This is done inductively on $i$ as follows: First set $\SU_0^0 = \SV_0$ and fix some continuous choice of wiggle $V_0 \longrightarrow \SV_0$. Suppose that we have already subdivided all $\SV_{i'}$ with $i' < i$, yielding some partial covering $\{\SU_{i'}^j\}_{i' < i}$ with corresponding choices of wiggles $\{\pi_{i'}(\SU_{i'}^j) \longrightarrow \SU_{i'}^j\}$. Now choose a very fine triangulation $\ST_i$ of $V_i$ and fix small contractible open neighborhoods $\{U_i^j\}$ of each simplex in $\ST_i$. Then $\{\SU_i^j = \pi_i^{-1}(U_i^j)\}$ is a covering of $\SV_i$ by flowboxes and we claim that this is enough to conclude.

Indeed, using transversality as above we can assume that each point in $\pi_i(\SU_i^j) \subset V_i$ meets at most $C$ of the manifolds $\{\pi_i(\partial \SU_{i'}^{j'} \cap \SU_i^j)\}_{i' < i}$. Additionally, there is a constant $D$, depending only on $\dim(K)$, bounding from above the number of simplices of $\ST_i$ intersecting a given simplex. Therefore, $\SU_i^j$ intersects at most $D$ of the flowboxes $\{\SU_i^{j'}\}_{j' \neq j}$. Condition (I) then holds by setting $N_0 \geq 2C+2D+1$. Additionally, if $\ST_i$ is fine enough, each element $U_i^j$ is a neighbourhood of a point so it is possible to make a continuous choice of wiggle. 
\end{proof}

We will need one more ingredient before we prove Theorem \ref{thm:main2}:
\begin{proposition}[\cite{PPP}] \label{prop:PPP}
Denote by $\SX(M)$ the space of line fields on a manifold $M$. Denote by $\SX_\no(M) \subset \SX(M)$ the subset of line fields without periodic orbits. The inclusion $\SX_\no(M) \subset \SX(M)$ induces a weak homotopy equivalence provided that $\dim(M) \geq 3$.
\end{proposition}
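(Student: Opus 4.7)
The plan is to establish the claim through a relative extension property, which is equivalent to the weak homotopy equivalence. Given a compact CW--pair $(K,\partial K)$ and a family of line fields $\SY: K \to \SX(M)$ with $\SY|_{\partial K} \subset \SX_\no(M)$, the goal is to build a homotopy, rel $\partial K$, ending at a family valued in $\SX_\no(M)$. This reduces the entire statement to a parametric surgery problem on the locus of $K \times M$ carrying closed orbits.

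First, I would apply a parametric Kupka--Smale type perturbation to $\SY$, supported away from a neighborhood of $\partial K$, so that the locus
\[ \mathcal{P} = \{(k,p) \in K \times M : p \text{ lies on a closed orbit of } \SY(k)\} \]
has the structure of a locally finite union of smooth submanifolds, stratified by the period of the orbit. Since closed orbits of line fields are detected by fixed points of local Poincaré return maps, generic perturbations make these persist in smooth families only along strata of controlled dimension; compactness of $K$ reduces the problem to finitely many such strata.

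Next, I would eliminate $\mathcal{P}$ stratum by stratum using a parametric plug insertion, crucially exploiting $\dim(M) \geq 3$. Near a connected piece of periodic orbit, the line field is trivialised by a flowbox diffeomorphic to $D^{\dim(M)-1} \times S^1$ in which $\SY$ is tangent to the circle factor. One then substitutes the standard line field by the line field underlying an aperiodic plug of Wilson--Schweitzer--Kuperberg type. The transverse dimension $\dim(M)-1 \geq 2$ is exactly what allows trajectories to be rerouted without producing new closed orbits. Finally, the local substitutions are glued together over a finite cover of $\mathcal{P}$ by parametric flowboxes, ordered so that each new plug is supported away from the earlier ones, and so that the whole procedure is performed rel a neighborhood of $\partial K$, where $\SY$ was already aperiodic.

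The main obstacle I anticipate is the parametric plug construction in the second step. A single aperiodic plug is already delicate to build, and in families the difficulty is twofold: the plug data must depend continuously on $k \in K$, and the resulting line field must remain aperiodic for every parameter value simultaneously, uniformly in the way the plugs are grafted onto one another. The hypothesis $\dim(M) \geq 3$ is essential: it provides the codimension needed both to reroute trajectories transversely and to make the parametric plug data generic. In dimension $2$, by contrast, Poincaré--Bendixson forces recurrent non--singular line fields on closed surfaces to contain closed orbits, obstructing any such construction and explaining why the dimensional bound is sharp.
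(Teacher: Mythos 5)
The paper does not actually prove this proposition: it is imported wholesale from \cite{PPP}, and the only indication given of its proof is the remark that it ``relies on the existence of parametric versions of the plugs of Wilson and Kuperberg.'' Your outline correctly identifies that circle of ideas, but as a proof it has two genuine gaps. First, the reduction you propose in step one does not work as stated: even after a Kupka--Smale perturbation, a non-singular vector field on a $3$-manifold typically has \emph{infinitely} many (hyperbolic) periodic orbits with unbounded periods, and in a compact family one additionally encounters bifurcations (saddle-node and period-doubling of closed orbits), so the locus $\mathcal{P} \subset K \times M$ is in general neither a locally finite union of submanifolds nor reducible to finitely many strata. The plug-based strategy does not proceed by locating the periodic orbits and excising them one at a time; rather, one covers $M$ by flowboxes through which every sufficiently long orbit must pass, inserts Wilson plugs to confine all periodic behaviour to the finitely many designed orbits inside the plugs, and only then uses aperiodic (Kuperberg/Schweitzer) plugs to destroy those. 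Your stratification step would have to be replaced by this covering argument.

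Second, and more importantly, the step you flag as ``the main obstacle I anticipate''---constructing the plug insertion continuously in $k \in K$ so that aperiodicity holds for every parameter simultaneously, relative to $\partial K$---is precisely the content of the theorem of \cite{PPP}. Deferring it means the proposal reproduces the statement's difficulty rather than resolving it. Your closing observations are nonetheless sound: the transverse dimension $\dim(M)-1 \geq 2$ is exactly what permits mirror-image trajectory rerouting inside a plug, and Poincar\'e--Bendixson shows the bound $\dim(M)\geq 3$ is sharp. So the outline is a fair description of \emph{why} the result should hold and of what \cite{PPP} supplies, but it is not an independent proof.
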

This result relies on the existence of parametric versions of the plugs of Wilson and Kuperberg. In particular, it states that the choice of a line field without periodic orbits in the statement of Theorem \ref{thm:main} is not a restriction from a homotopical point of view.

\begin{proof}[Proof of Theorem \ref{thm:main2}]
Let $\SY_i \subset \SD_i$ be the certificate of $N$--looseness of $\SD_i$, and fix
$$ \tilde\SD: K \times [0,1] \longrightarrow \FEngel(M) $$
a family of formal Engel structures connecting $\SD_0$ and $\SD_1$. Write $\tilde\SW$ for the formal kernel of $\tilde\SD$. Fix a family of line fields $\SY \subset \tilde\SD$ connecting $\SY_0$ and $\SY_1$ and transverse to $\tilde\SW$. Construct a triangulation $\ST$ of $M \times K \times [0,1]$ in general position with $\SY$, in general position with $M \times K \times [0,1] \longrightarrow K \times [0,1]$, and restricting to triangulations $\ST_i$ on $M \times K \times \{i\}$ also in general position \cite{CPPP,Th0}.

Then apply Proposition \ref{prop:bootstrapping} to achieve that for any top dimensional simplex $\sigma \in \ST_i$, the restriction $\SD_i|_{\SU(\sigma)}$ is a family of solid $N_1$--convex shells. This allows us to apply the reduction in Proposition \ref{prop:reduction} relative to the ends $M \times K \times \{0,1\}$. Then Proposition \ref{prop:extension} can be used to achieve the Engel condition in the interior of the top cells. Following the proof of Theorem \ref{thm:main1}, we have deformed the 2-distribution $\tilde\SD$ to a family of Engel structures $\SD: K \times I \longrightarrow \Engel(M)$. By construction, the 2-distribution $\SD$ restricts to $\SD_i$ in $M \times K \times \{i\}$ as desired. Finally, if $\SY_0$ and $\SY_1$ have no periodic orbits, the same can be assumed about $\SY$ after an application of Proposition \ref{prop:PPP}. Therefore, if the Engel structure $\SD_i$ are $\infty$--loose, the same holds for the homotopy $\SD$.
\end{proof}

\subsection{Proof of Theorem \ref{thm:main}} \label{ssec:main}

Consider a family $\SD: (\D^k,\partial\D^k) \longrightarrow (\FEngel(M,\SY),\Loose(M,\SY))$. Theorem \ref{thm:main1} implies that $\SD(0)$ can be homotoped to be $\infty$--loose. Then we can regard $\SD$ as a formal homotopy between the family $\SD|_{\partial\D^k}$ and the constant family $\SD(0)$. Applying Theorem \ref{thm:main2} shows that $\SD$ can be homotoped to have image in $\Loose(M,\SY)$, as desired. \hfill$\Box$

This concludes the $h$-principle for loose Engel structures.

\section{Applications}\label{sec:app}

In this section we prove Corollary \ref{cor:prolong} on Engel prolongations, and discuss two additional families of examples of loose Engel structures. It follows from Theorem \ref{thm:main2} that these families satisfy the $h$-principle, and thus exhibit completely flexible behaviour.

\subsection{Prolongations}\label{ssec:EngelProlong}

\'E.~Cartan introduced in \cite{Car} the notion of \emph{prolongation} for contact structures and Lorentzian metrics, which we exploited in recent work \cite{CPPP} to manipulate Engel structures locally. Let us review these two constructions and prove Corollary \ref{cor:prolong}.

Let $V$ be a smooth oriented $3$-manifold and $\xi$ an oriented $2$--plane distribution. By definition, the associated oriented \textbf{formal Cartan prolongation} $(M(\xi),\SD(\xi))$ is the sphere bundle $M(\xi):= \NS(\xi) \stackrel{\pi}{\longrightarrow} V$ endowed with the tautological distribution
\begin{equation} \label{eq:taut}
\SD(\xi)(p,l) = \pi^* [l].
\end{equation}
The distribution is Engel if and only if $\xi$ is a contact distribution. Indeed, this is Condition (B) in Proposition \ref{prop:EngelCharacterisation}. In this case, the Engel structure $(M(\xi),\SD(\xi))$ is called the \textbf{Cartan prolongation} of the contact structure $\xi$.
 
Suppose instead that the $3$-manifold manifold $V$ is endowed with a Lorentzian metric $g$. The kernel $C_g$ of the Lorentzian metric, known as the {\it light cone}, defines a sphere bundle $M(g) := \PP(C_g) \stackrel{\pi}{\longrightarrow} V$ endowed with a tautological distribution $\SD(g)$ defined again by Equation (\ref{eq:taut}). This distribution is always an Engel structure, since it satisfies Condition (A) in Proposition \ref{prop:EngelCharacterisation}. By definition, the Engel structure $(M(g),\SD(g))$ is the \textbf{Lorentz prolongation} of $g$.

A particular case which is of interest for us is as follows: Consider an orientable and coorientable plane field $\xi$. Endow it with a metric $g_\xi$ and pick a complementary vector field $\nu$. Then, the pair $(g_\xi,\nu)$ defines a family of Lorentz metrics $(g_r)_{r \in \R^+}$ by declaring the vector field $\nu$ to be orthogonal to $\xi$ and of norm $-r$. As $r$ goes to infinity, the light cone $C_{g_r}$ converges to the plane field $\xi$. We can apply the prolongation construction parametrically in $r$. This allows us to think of the structures $(M(g_r), \SD(g_r))$ as convex push-offs \cite{dP17} of the formal Cartan prolongation $(M(\xi),\SD(\xi))$.

Conversely, given any Lorentz structure $g$ and any space-like plane field $\xi$, there exists a unique line complement such that $g$ is a push-off of a metric in $\xi$ by the recipe above.

Corollary \ref{cor:prolong} states that these Engel structures are all loose. The key ingredient is the observation that the light-cone intersects the unit sphere in a convex curve which is embedded (i.e. a wiggle):
\begin{proof}[Proof of Corollary \ref{cor:prolong}]
Let $K$ be a compact parameter space, $\SD$ a $K$--family of Lorentz prolongations, and $\xi$ a $K$--family of plane fields in $V$ such that $\SD(k)$ is a convex push-off of $\SD(\xi(k))$ given by a family of functions $r(k)$ (and a corresponding family of metrics in the plane fields $\xi(k)$). By performing an Engel homotopy, given by increasing the real numbers $r(k)$, we can assume that $d\pi(\SD(k))$ is arbitrarily close to $\nu(k)$, where $\nu(k): V \longrightarrow TV$, $k \in K$, is a vector field transverse to $\xi(k)$.

Now consider a family of line fields $\SY_s(k) \subset \SD(k)$, $s \in [0,1]$, spanned by vector fields $Y_s(k)$, with $\SY_0(k)$ contained in the fibre direction and all others transverse to it. Over any $3$--disc in $V$ (lifted to the fibre bundle by taking a section), the vector fields $Y_s(k)$ provide a family of return maps $\phi_{k,s}$, with $\phi_{k,0}$ the identity.

We claim that, for any fixed $N \in \N$, the iterates of the return maps
$$\phi^{(j)}_{k,s},\quad j=1, \ldots, N,$$
have no fixed points if $s\neq 0$ is close enough to $0$. Indeed, since we have pushed the prolongations to be very convex, $\phi^{(j)}_{k,s}$ becomes a map that approximates an arbitrarily short time flow of $\nu(k)$. By compactness of $V$, the map cannot have fixed points. Now the claim follows by taking $N$ larger than the universal constant $N_0$ corresponding to a family of dimension $\dim(K)$: the resulting family of Engel structures is $N$--loose. This proves the claim in the Lorentz case. Suppose now that $\SD$ is family of orientable Cartan prolongations, then $\SD$ is homotopic to a family of Lorentz prolongations by a convex push--off, which proves the statement as desired.
\end{proof}

A striking consequence of Corollary \ref{cor:prolong} is the following: Given two non-isotopic contact structures homotopic as plane fields, their Cartan prolongations are Engel homotopic. In particular, any non--formal contact invariant becomes formal by taking Cartan prolongations.

\begin{remark}
In the literature, see for instance \cite{dP17}, a more general notion of Cartan prolongation is considered: Assume, imposing the obvious condition on the Euler class, that there is a sphere bundle $E$ that $m\colon\!1$ covers $\NS(\xi)$. We can then define Engel structures on $E$ by pulling-back $\SD(\xi)$; similarly, we can construct $m\colon\!1$ coverings of Lorentzian prolongations. The general statement is then: Given $\SD: K \to \Engel(M)$ and $\pi: \hat{M} \to M$ a $m\colon\!1$ cover, we can construct a family $\pi^*\SD: K \to \Engel(M)$ by pull-back. If $\SD$ is $N$--loose, then $\pi^*\SD$ is $mN$--loose. \hfill$\blacksquare$
\end{remark}

\subsection{Other loose families in the literature} \label{ssec:cppp}

In this subsection we show that the families of Engel structures constructed in \cite{CPPP} and \cite{CPV} are loose. This is shown for the former class in the following
\begin{proposition}
Let $M$ be a closed $4$-manifold and let $K$ be a compact manifold. Any family of Engel structures $\SD: K \longrightarrow \Engel(M)$ constructed using the $h$-principle in \cite{CPPP} is loose up to Engel homotopy.
\end{proposition}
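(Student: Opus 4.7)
The plan is to identify, inside the [CPPP] construction, a natural line field $\SY$ that will serve as the certificate of looseness, and to show that the resulting Engel structures are $N$--loose for arbitrarily large $N$ after a small Engel homotopy. Recall the setup of [CPPP]: one fixes a line field $\SY$ transverse to the formal kernel $\SW$, a triangulation $\ST$ of $M \times K$ in general position with respect to $\SY$ and the projection $M \times K \to K$, and the covering by $\SY$--flowboxes $\{\SU(\sigma)\}_{\sigma \in \ST}$ exactly as in Proposition \ref{prop:reduction}. The Engel structure is then built cell by cell by adding wiggles to the development map along $\SY$, using model distributions that satisfy either condition (A) or condition (B) of Proposition \ref{prop:EngelCharacterisation}. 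I would take this very line field $\SY$ as the candidate certificate.

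I would then carry out three steps in order. First, observe that the quantitative nature of the [CPPP] $h$--principle permits one to insert arbitrarily many wiggles in each flowbox by refining the triangulation, so every $\SY$--orbit meets some top--dimensional flowbox and encounters there at least $N$ wiggles whose image under the target map of $\Mon(M,\SY)$ is an embedded segment. Second, by the very construction of [CPPP], the development map along $\SY$ is automatically convex over the lower--dimensional skeleton, where condition (A) is in force, while inside a top--dimensional flowbox condition (B) may have been used instead, so convexity can a priori fail on a codimension--$0$ region. Third, each top--dimensional flowbox is then a $\D^{\dim(K)}$--family whose boundary germ is genuinely Engel with convex development and contains $N$ wiggles, i.e.~an $N$--convex shell in the sense introduced earlier in this section. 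An application of Proposition \ref{prop:extension} to each such shell in turn, relatively to the previously treated ones, produces an Engel homotopy supported in the interiors of top cells that converts every shell into a solid $(N-1)$--convex shell. The resulting family has convex development along $\SY$ and at least $N-1$ wiggles per orbit, and is therefore $(N-1)$--loose with certificate $\SY$; choosing $N \geq N_0 + 1$ at the start gives looseness in the sense of Definition \ref{def:loose}.

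The main obstacle is to ensure that the regions where [CPPP] used condition (B) can be repaired by Proposition \ref{prop:extension} without destroying the convexity already established along the lower--dimensional skeleton. This is precisely where the relative nature of Proposition \ref{prop:extension}, together with the possibility of refining the triangulation to produce arbitrarily many wiggles in the boundary germ, plays the decisive role: the Engel homotopy is confined to the interior of top--dimensional flowboxes, where a single wiggle suffices for Little's homotopy to fill in a convex development map, and it is performed relative to the boundary of each shell, so the corrections assemble globally into a single Engel homotopy. Once this local--to--global extension is verified, looseness follows immediately from Definition \ref{def:loose}.
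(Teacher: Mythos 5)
There is a genuine gap. The central difficulty that the paper's proof addresses---and which your argument omits entirely---is that the natural line field $\SY$ coming from the \cite{CPPP} construction is \emph{not} an admissible certificate as it stands: at the points where condition (B) of Proposition \ref{prop:EngelCharacterisation} was used, $\SY$ becomes tangent to the kernel $\SW$ and the development curves acquire inflection points, i.e.\ they are only \emph{weakly} convex (curvature $\geq 0$). Definition \ref{def:loose} requires strict convexity of the development curves, which forces the certificate to be transverse to $\SW$. Your second step asserts that convexity ``can a priori fail on a codimension--$0$ region'' inside top cells and proposes to repair this with Proposition \ref{prop:extension}; but that proposition is an extension result for shells that are not yet solid, whereas the \cite{CPPP} structures are already genuinely Engel everywhere, so there is nothing to extend---the defect is not a failure of the Engel condition but a failure of the candidate certificate. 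The paper's resolution is different and essential: the tangencies $\SY=\SW$ are degenerate, so a $C^\infty$--small perturbation $\SY'$ of $\SY$ is everywhere transverse to $\SW$ and the development along $\SY'$ is everywhere strictly convex; one then perturbs the flowboxes to $\SY'$--flowboxes.

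A second, smaller problem is your mechanism for producing arbitrarily many wiggles. ``Refining the triangulation'' modifies the \cite{CPPP} construction itself rather than producing an Engel homotopy of the \emph{given} family, which is what the statement demands. The paper instead starts from the $3$--(weakly) convex shells that the given family already possesses and applies Proposition \ref{prop:slidingWiggles} to one of the existing wiggles in each shell, generating $N$ wiggles through an honest Engel homotopy; since every $\SY'$--orbit meets some shell, this yields $N$--looseness with certificate $\SY'$. Your overall skeleton (identify a certificate, localise to flowboxes, boost the wiggle count) is the right one, but without the transversality perturbation and with Proposition \ref{prop:extension} in place of Proposition \ref{prop:slidingWiggles}, the argument does not go through.
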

\begin{proof}
The construction in \cite{CPPP} produces a family $\SD: K \longrightarrow \Engel(M)$ of Engel structures with corresponding line fields $\SY: K \longrightarrow \SX(M)$, $\SY(k) \subset \SD(k)$, such that the associated development maps $\gamma_{\SY(k)}(\SD(k))$ satisfy:
\begin{enumerate}
\item[-] The curves $\gamma_{\SY(k)}(\SD(k)_p$ are immersed and weakly convex,
\item[-] There is a finite number of disjoint $3$--weakly convex shells $\SU_i$ that together cover the orbit space $(M \times K)/\SY$.
\end{enumerate}
A curve in $\R\PP^2$ is said to be weakly convex if its curvature is greater or equal than zero. Weakly convex wiggles and $N$-weakly convex shells are defined in the natural manner. The balls $\SU_i$ are of the form introduced in Proposition \ref{prop:reduction}: they are flowboxes obtained by shrinking the top cells of a triangulation of $M \times K$ in general position with respect to $\SY$.

It follows from Proposition \ref{prop:EngelCharacterisation} that $\SY(k)_p = \SW(k)_p$ if and only if the development map has an inflection point at $p$. This implies that the tangencies, defined by $\SY(k) = \SW(k)$, are all degenerate and there is a $C^{\infty}$--perturbation $\SY' \subset \SD$ of $\SY$ that is everywhere transverse to $\SW$. The development map of $\SD$ along $\SY'$ is everywhere convex. We can then perturb the collection $\{\SU_i\}$ to a collection of $\SY'$--flowboxes $\{\SU_i'\}$ covering the orbit space $(M \times K)/\SY'$ such that $\SU_i'$ is a $3$--convex shell. By applying Proposition \ref{prop:slidingWiggles} we can deform the Engel structure on each $\SU_i'$ so that it becomes a solid $N$--convex shell, with $N$ arbitrarily large. This produces a new family $\SD': K\longrightarrow \Engel(M)$. Since each orbit of $\SY'$ intersects at least one $U_i$, we obtain that the family is loose with $\SY'$ its certificate of looseness.
\end{proof}

\begin{remark}
The following is a technical observation. The Engel structures constructed in \cite{CPPP} depend on a real parameter $E >0$ that needs to be chosen large enough. There is  also an increasing function $N: \R^+ \longrightarrow \Z^+$, such that $\lim_{E \longrightarrow \infty} N(E)=\infty$. Now, the families of Engel structures $\SD: K \longrightarrow \Engel(M)$ constructed using the $h$-principle in \cite{CPPP} satisfy that the open balls $\SU_i'$ are $N(E)$-convex shells. Hence, for $E$ large enough, the original family is already loose, without having to deform it. \hfill$\blacksquare$
\end{remark}

The recent article \cite{CPV} constructs Engel structures adapted to open books, in line with the contact Giroux correspondence. Away from the binding, which is a disjoint union of tori, the structures can be understood as Cartan prolongations of a contact manifold with trivial Euler class. The following statement is proven in \cite{CPV}:
\begin{proposition}
The Engel structures constructed in \cite{CPV} are loose.
\end{proposition}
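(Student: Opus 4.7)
The plan is to exploit the geometric decomposition provided in \cite{CPV}. The underlying $4$-manifold $M$ admits an Engel open book whose binding $B$ is a disjoint union of $2$-tori, and on $M \setminus \Op(B)$ the Engel structure is modelled on a Cartan prolongation of a contact $3$-manifold with trivial Euler class. The strategy is to produce a certificate $\SY$ separately on the prolongation region and near the binding, patch the two choices, and apply Proposition \ref{prop:slidingWiggles} to distribute wiggles globally.

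On the complement $M \setminus \Op(B)$ we are exactly in the situation treated in the remark following Corollary \ref{cor:prolong}: a Cartan prolongation with trivial Euler class is a mapping torus with fibre a contact $3$-fold and identity return map, so replacing the identity by the $T$-time flow of a Reeb vector field forces every closed orbit of the kernel $\SW$ to represent a large multiple of the fibre class. Choosing $\SY$ transverse to $\SW$ and close to the fibre direction, each development curve $\gamma_\SY(\SD)_p$ accumulates $O(1/T)$ wiggles before closing up, yielding $N$-looseness with $N = O(1/T)$, and $\infty$-looseness for a generic Reeb choice. On $\Op(B)$ the neighbourhood model from \cite{CPV} is explicit; using Proposition \ref{prop:EngelCharacterisation} one reads off a line field $\SY$ transverse to $\SW$ whose development curves are convex and contain at least one wiggle per segment of $\SY$-orbit in the collar.

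The main work, which I expect to be the actual obstacle, is the interpolation of these two choices of $\SY$ across the boundary of $\Op(B)$ while remaining transverse to $\SW$, and the verification that after interpolation no orbit of $\SY$ is starved of wiggles. Once the interpolation is in place, I would apply Proposition \ref{prop:slidingWiggles} orbit-by-orbit to homogenise the wiggle density, invoking Lemma \ref{lem:convexity} to keep the development map convex throughout the redistribution. The resulting family then satisfies Definition \ref{def:loose} with certificate $\SY$, so the Engel structures of \cite{CPV} are loose.
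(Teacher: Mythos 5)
Your decomposition-and-interpolation strategy is not the paper's argument, and as written it has a genuine gap: the step you yourself flag as ``the actual obstacle'' --- interpolating the two choices of certificate $\SY$ across $\partial\Op(B)$ while keeping transversality to $\SW$ and ensuring no $\SY$-orbit is starved of wiggles --- is exactly the part that carries the content of the proposition, and you leave it unresolved. It is not a routine patching problem: orbits of the interpolated $\SY$ can spend arbitrarily long stretches in the transition collar, where neither of your two wiggle counts applies, and Definition \ref{def:loose} demands $N$ wiggles on an \emph{embedded} segment through every point. Moreover, your proposed fix --- applying Proposition \ref{prop:slidingWiggles} ``orbit-by-orbit'' --- is not licensed by that proposition, which operates on compact families of curves in flowboxes and destroys wiggles near the boundary of each flowbox; redistributing wiggles globally requires the covering and bookkeeping of Proposition \ref{prop:bootstrapping}, not a per-orbit application. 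A secondary issue is the binding: you assert that the explicit model near $B$ yields a convex development map with a wiggle per segment, but this needs verification against the actual model in \cite{CPV}, and in any case one wiggle per segment is far short of the $N_0$ required for looseness.

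The paper takes a much shorter route that sidesteps the decomposition entirely. The construction in \cite{CPV} comes with a global integer parameter $k$ (odd, to match the binding model) counting the number of turns of the Engel plane relative to a Legendrian framing on the pages; this $k$ is precisely the wiggle count of the development map, uniformly over $M$. Little's homotopy (Lemma \ref{lem:convexity}) shows $\SD_k$ and $\SD_{k+2}$ are Engel homotopic, so every structure in the family is homotopic to one with $k$ arbitrarily large, which is loose outright. In other words, the certificate and the wiggle estimate are already built into the construction; no separate treatment of the prolongation region and the binding, and no interpolation, is needed. If you want to salvage your approach you would have to carry out the interpolation in detail along the lines of Propositions \ref{prop:reduction} and \ref{prop:bootstrapping}, which is considerably more work than the intended argument.
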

\begin{proof}
The construction in \cite{CPV} depends on a constant $k \in \Z^+$ which measures the number of turns performed by the Engel structure in terms of a legendrian framing on the page. We denote by $\SD_k$ the Engel structure that turns $k$ times. In order to extend it to the binding we need a canonical model on it that requires $k$ to be odd. Little's homotopy implies that $\SD_k$ and $\SD_{k+2}$ are homotopic. For $k$ large enough the structure is loose, since the number $k$ precisely accounts for the turning of $\SD_k$ in terms of the development map.
\end{proof}

\section{Appendix: flexibility in Engel and contact topology} \label{sec:appendix}

In this appendix we discuss the interaction between Engel structures \cite{CP,CPPP}, contact structures \cite{El89,Ge}, and the $h$-principle \cite{EM,Gr86}. Its goal is to study the manifestations and subtleties of the $h$-principle as seen from the recent new perspectives \cite{BEM,CPPP,PV,Vo}. Let us start with contact structures as the prism through which we are used to looking at the $h$-principle.

\subsection{Contact flexibility}

Even though contact structures do not abide by the $h$--principle \cite{Ge}, there is a subset of \emph{overtwisted} contact structures whose behaviour is flexible, i.e.~ their classification up to homotopy is governed by the underlying formal data. This display of flexibility is precise at the $\pi_0$-level, but for higher homotopy groups the picture is more subtle, as we explain.

Let $N$ be a closed orientable $(2n+1)$-manifold, $\FCont(N,\Delta)$ the space of almost contact structures with overtwisted disc $\Delta$ \cite{Gr86} and $\Cont_\OT(N,\Delta)$ the subspace of contact structures also overtwisted with disc $\Delta$. The main result in \cite{BEM,El89} is that the forgetful inclusion
\[ \Cont_\OT(N,\Delta) \longrightarrow \FCont(N,\Delta) \]
is a weak homotopy equivalence. This is where the first subtlety arises: the overtwisted disk $\Delta$ has been fixed. Recently, it has been shown that the space of overtwisted contact structures does not have, necessarily, the homotopy type of the space of formal contact structures \cite{Vo}. This failure of flexibility is precisely related to the homotopy type of the space of overtwisted discs in a fixed contact structure.

The articles \cite{BEM,El89} actually prove a stronger result, in which the overtwisted disc is allowed to vary: Let $\xi_0,\xi_1: K \longrightarrow \Cont(N)$ be two $K$--families of contact structures, with $K$ a compact CW--complex. Let $\Delta_0,\Delta_1$ be corresponding $K$--families of overtwisted discs and assume that there is a homotopy of pairs $(\xi_t,\Delta_t)$ with $\xi_t: K \longrightarrow \FCont(N)$ having $\Delta_t$ as overtwisted discs. Then, the families $\xi_0$ and $\xi_1$ are homotopic through contact structures relative to $\Delta_t$. Conversely, if $\xi_t: K \longrightarrow \Cont(N)$ is a homotopy between $\xi_0$ and $\xi_1$, and $\xi_0$ admits a family of overtwisted discs $\Delta_0$, we deduce from Gray stability that $\xi_t$ lifts to a homotopy of pairs $(\xi_t,\Delta_t)$.

That is, the $K$-family $\xi_0$ presents a flexible behaviour if a choice of $\Delta_0$ exists. This leads us to introduce the following definition, formalizing an ubiquitous idea in the theory of $h$-principles \cite{EM}:
\begin{definition}
Let $\xi_0$ be a $K$-family of contact structures. A continuous choice of $\Delta_0$ is said to be a \emph{certificate of overtwistedness} for the \emph{overtwisted family} $\xi_0$.
\end{definition}

\subsubsection{Overtwisted classes}

The $h$--principle in contact geometry does not hold without the mediation of a certificate, and the central obstruction is its homotopy type. At the most basic level, the family $\xi_0$ might not even admit a continuous choice of certificate $\Delta_0$, even if all the structures are individually overtwisted. This is known to happen \cite{Vo}: there exists a formally contractible loop of overtwisted contact structures in $\NS^3$ that admits no certificate and therefore is not contractible geometrically.

Two overtwisted families of contact structures may be formally homotopic but have certificates in different homotopy classes. However, there is a stable range in which this obstruction vanishes and an \emph{algebraic} form of the $h$--principle holds: Recall the forgetful inclusion $i: \Cont(N) \longrightarrow \FCont(N)$, and fix an overtwisted basepoint $\xi \in \Cont(N) \subset \FCont(N)$. We consider the homotopy groups $\pi_k(\Cont(N))$ and $\pi_k(\FCont(N))$ based at $\xi$. A class $\alpha \in \pi_k(\Cont(N))$ is said to be \emph{overtwisted} if it can be represented by an overtwisted family.

\begin{proposition} \label{prop:algebraicHPrinciple}
Let $N$ be a closed $(2n+1)$-manifold.  Consider the subgroup $\OT_k(N) \subset \pi_k(\Cont(N))$ consisting of overtwisted classes, for $0\leq k\leq 2n$.

Then, the inclusion $\pi_k(i): \OT_k(N) \longrightarrow \pi_k(\FCont(N))$ is a group isomorphism.
\end{proposition}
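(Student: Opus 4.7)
The plan is to reduce this statement to the parametric $h$-principle for overtwisted contact structures with a fixed family of certificates, which was recalled just above the proposition: for any $K$-family of overtwisted discs $\Delta$, the forgetful inclusion $\Cont_\OT(N, \Delta) \hookrightarrow \FCont(N, \Delta)$ is a weak homotopy equivalence. The obstacle to deducing the algebraic statement directly is that $\OT_k(N)$ is defined without reference to a certificate, so the certificate must be produced, and matched between the two ends of any homotopy, by hand. The mechanism is a standard ``ball stabilization'' trick: introduce a local model for an overtwisted disc inside a coordinate ball $B \subset N$ and use it as a universal certificate.

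For surjectivity, I would start with a representative $f: S^k \longrightarrow \FCont(N)$ of a class in $\pi_k(\FCont(N))$. Fixing a coordinate ball $B \subset N$ and a standard overtwisted contact germ $\xi_B$ on $B$ carrying a distinguished overtwisted disc $\Delta_B$, I would first use local contractibility of $\FCont$ on a ball to homotope $f$ inside $\FCont(N)$ so that $f(s)|_B \equiv \xi_B$ for every $s \in S^k$. The constant family $\Delta_B$ then defines a formal certificate for $f$, and parametric BEM promotes $f$, rel this certificate, to a genuine contact family $g: S^k \to \Cont_\OT(N, \Delta_B)$. By construction $[g] \in \OT_k(N)$ and $\pi_k(i)[g] = [f]$, establishing surjectivity.

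For injectivity, suppose $f_0, f_1 \in \OT_k(N)$ with certificates $\Delta_0, \Delta_1$ map to the same class in $\pi_k(\FCont(N))$, and choose a formal homotopy $F: S^k \times [0,1] \to \FCont(N)$. Applying the ball standardization above, together with small contact homotopies of $f_0, f_1$ supported near $B$, I would arrange $F(s,t)|_B \equiv \xi_B$ for all $(s,t)$, rel $t \in \{0,1\}$. Then $\Delta_B$ serves as a common certificate for $F$, and parametric BEM (the relative/uniqueness side) produces a contact homotopy $F': S^k \times [0,1] \to \Cont_\OT(N, \Delta_B)$ from $f_0$ to $f_1$, so $[f_0] = [f_1]$ in $\pi_k(\Cont(N))$.

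The main obstacle is precisely this rel-endpoint standardization. At $t = 0, 1$ the contact families $f_0, f_1$ come equipped with certificates $\Delta_0, \Delta_1$ which are a priori unrelated to $\Delta_B$, so a preliminary contact homotopy (supported near $B$ and supplied by the parametric $h$-principle applied locally to the ball) is required in order to install $\Delta_B$ as an additional honest certificate for the $f_i$. This step is morally harmless, since inserting an extra overtwisted disc inside an already overtwisted contact structure is a homotopically trivial operation, but it is the precise point where the passage from the fixed-certificate statement of \cite{BEM,El89} to the certificate-free algebraic $h$-principle takes place, and it also explains why $\OT_k(N)$ is naturally a subgroup of $\pi_k(\Cont(N))$ rather than merely a subset.
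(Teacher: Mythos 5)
Your strategy is the natural one, but the ball--standardization step on which both halves of your argument rest is not available, and this is exactly the subtlety the paper's proof is designed to circumvent. The space of formal contact structures on a ball $B$ is \emph{not} contractible: restriction to the centre $p$ is a homotopy equivalence onto the space of almost contact structures on the single vector space $T_pN$, which is a Grassmannian-type space (cooriented hyperplanes equipped with a compatible complex structure) with non-trivial homotopy groups in degrees up to $2n$. Consequently the composite of your family with the evaluation $\FCont(N) \longrightarrow \FCont(B)$ can represent a non-trivial class in $\pi_k$ for $k \leq 2n$ (already for $n=1$, $k=2$ one obtains maps $\NS^2 \to \NS^2$ of arbitrary degree by evaluating the plane field at $p$), and since this class is a homotopy invariant of $f$ in $\FCont(N)$, no homotopy can make $f(s)|_B$ constant. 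The same obstruction kills the rel--endpoint standardization in your injectivity argument. The step you describe as ``morally harmless'' is in fact the crux, and it fails as stated.

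The paper's proof fixes only the base \emph{point} $p$ of the overtwisted disc (using $k<2n+1$ to isotope all discs off a point $p$ first), and explicitly allows the disc --- and hence the contact germ near $p$ --- to vary with the parameter $a \in \NS^k$. The key observation is that this variation is entirely carried by the formal datum $\xi(a)_p$, so that any formal homotopy between two overtwisted families with discs based at $p$ lifts to a homotopy of pairs, to which the moving-certificate version of \cite{BEM,El89} applies. If you wish to keep your architecture, replace ``constant germ on $B$'' by ``disc based at a fixed point, with germ prescribed by the formal data''; as written, the proof has a genuine gap.
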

\begin{proof}
Let $\xi: \NS^k \longrightarrow \Cont(N)$ be an overtwisted family of contact structures with certificate $\Delta$. Since $k<2n+1$, after an isotopy we may assume that there is a point $p\in N$ which is not contained in any of the overtwisted discs $\Delta(a)$, $a \in \NS^k$. This allows us to use the $h$--principle to introduce an overtwisted disc at $p$, for all $\xi(a)$. Even if they are all based at the same point, the family of overtwisted discs might be non-trivial, but this non-triviality is carried by the formal type of the family $\xi$ encoded by the value of the distribution $\xi(a)$ at the point $p$. Any formal homotopy between overtwisted families having overtwisted discs centered at a fixed point lifts to a homotopy of pairs, concluding the proof.
\end{proof}
We say that a class not belonging to the \emph{overtwisted subgroup} $\OT_k(N)$ is a {\em tight} class. T.~Vogel's loop of overtwisted contact structures \cite{Vo} is the first instance of a $1$--dimensional tight family of individually overtwisted contact structures.

\subsubsection{Tight classes}

One can also observe that the tight classes have a natural group structure. First, we claim that $\Tight_k(N)= \pi_k(\Cont(N))/\OT_k(N)$ is a group, $k>0$. For this to hold, we must show that $\OT_k(N)$ is a normal subgroup. If $2\leq k \leq 2n$, this is true since the groups are abelian. For $k=1$, we have the following sequence of inclusions:
$$ \OT_1(N) \to \pi_1(\Cont(N)) \stackrel{\pi_1(i)}{\to} \pi_1(\FCont(N)) \simeq \OT_1(N). $$
And therefore $\Tight_1(N)$ is the kernel of the map $\pi_1(i)$. Then, we may interpret the quotient $\Tight_k(N)$ as a subgroup of $\pi_k(\Cont(N))$: it corresponds precisely to the homotopy classes of contact spheres that are homotopically trivial as almost contact spheres. Left multiplication with the overtwisted representative identifies the fibers over any other formal class, and therefore all the fibers of the map $\pi_k(i)$ are conjugated subgroups.

This stands in sharp contrast with the case $k=0$: The projection map $\pi_0(\Cont(\NS^3)) \to \OT_0(\NS^3)$ has one element in each fiber except for the fibre containing the standard contact structure, which contains two \cite{El89, El92}.

\subsection{Engel flexibility}

We can now look at the same concepts from the lens of Engel topology.

\subsubsection{Local and global}

Engel looseness differs from contact overtwistedness in that the definition of certificate we have given is not \emph{local}. The contact overtwisted disc is a particular model in a ball (or a particular contact germ over a $2n$--disc). In contrast, Engel looseness must be checked \emph{globally} on the manifold $M$ using the line field $\SY$. 

In \cite{PV} a \emph{local} Engel overtwisted disc is defined. It allows to prove flexibility in a manner that is analogous to the contact case. The main result there reads: let $\Engel_\OT(M,\Delta)$ be the space of Engel structures on $M$ having $\Delta$ as a (local) overtwisted disc. Let $\FEngel(M,\Delta)$ be the corresponding formal space. Then, the inclusion $\Engel_\OT(M,\Delta) \to \FEngel(M,\Delta)$ is a weak homotopy equivalence. Statements where the overtwisted disc is allowed to move parametrically also hold and overtwisted homotopy subgroups can be defined as well.

This leads to a surprising situation. On the one hand, Engel flexibility holds once a particular local model is found in the manifold; this is a consequence of the fact that the overtwisted disc appears to be the necessary ingredient to solve the Engel extension problem for any germ on $\partial\D^4$. On the other hand, families that seemingly do not possess this local model might still behave flexibly if they ``turn sufficiently with respect to some line field'', i.e. they are loose.

We then observe that looseness cannot yield an $h$--principle \emph{relative in the domain}, as overtwistedness does. The reason behind this is that the reduction process (achieving enough convexity in the codimension--$1$ skeleton, Proposition \ref{prop:reduction}) cannot be completed when the Engel structure is already fixed in some part of the domain (possibly having very little convexity). In particular, the extension problem of a germ in $\partial\D^4$ to the interior cannot be solved in full generality using looseness. 

Using the relative nature of the $h$--principle, one can show that an overtwisted Engel structure contains all possible local models up to Engel homotopy. From this one can deduce that any two definitions of local overtwistedness are equivalent. However, since looseness is a global property, it cannot be compared to overtwistedness. In particular, looseness has no known analogue in contact topology.

\subsubsection{Loose classes}

Recall the forgetful inclusion $\Engel(M) \longrightarrow \FEngel(M)$ and fix a loose basepoint $\SD \in \Engel(M)$. We may look at the groups $\pi_k(\Engel(M))$ and $\pi_k(\FEngel(M))$ based at $\SD$. A class $\alpha \in \pi_k(\Engel(M))$ is \emph{loose} if it can be represented by a loose family. Note that conjugating by a loose loop takes loose classes to loose classes.

In the $h$-principle for loose Engel structures, the homotopy type of the certificate is encoded in the formal type (since the certificate is always transverse to the kernel of the Engel structure). From this, we deduce the $h$--principle in its algebraic form for all homotopy groups (and not just in some stable range). This is yet another significant difference between {\it loose} Engel structures and {\it overtwisted} contact structures; see Proposition \ref{prop:algebraicHPrinciple}.
\begin{corollary} \label{cor:looseSubgroups}
Given a closed $4$-manifold $M$, let $\Loose_k(M) \subset \pi_k(\Engel(M))$ be the subgroup of loose classes. Then $\Loose_k(M) \longrightarrow \pi_k(\FEngel(M))$ is a group isomorphism. \hfill$\Box$
\end{corollary}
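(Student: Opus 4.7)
The plan is to build, using Theorem \ref{thm:main1}, a well-defined set-theoretic section
\[ \phi: \pi_k(\FEngel(M)) \longrightarrow \pi_k(\Engel(M)) \]
of $\pi_k(i)$ whose image is exactly $\Loose_k(M)$, and then promote $\phi$ to a group homomorphism. This last promotion will be the main obstacle, since it requires coordinating the certificates of different loose representatives. Throughout, I would fix a basepoint $\SD_0 \in \Engel(M)$ and, using Proposition \ref{prop:PPP}, a certificate $\SY_0 \subset \SD_0$ which is a line field transverse to $\SW_0$ without periodic orbits.

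To define $\phi$, given $[\alpha] \in \pi_k(\FEngel(M))$ represented by a based map $\alpha: (\NS^k, *) \to (\FEngel(M), \SD_0)$, I would apply the relative version of Proposition \ref{prop:main1} at the already--Engel basepoint to obtain an $N$--loose lift $\tilde\alpha$ with $N \geq N_0$, $\tilde\alpha(*) = \SD_0$, and certificate $\SY_0$ at $*$; then set $\phi([\alpha]) = [\tilde\alpha]$. This is well-defined by Theorem \ref{thm:main2}: any two such loose lifts are formally homotopic relative to the basepoint, hence Engel homotopic through loose families. Since $\pi_k(i) \circ \phi = \mathrm{id}$, the map $\phi$ is injective.

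The hard step is showing $\phi$ is a group homomorphism. For $[\alpha], [\beta] \in \pi_k(\FEngel(M))$, I would pick loose representatives $A, B$ of $\phi([\alpha]), \phi([\beta])$ with certificates $\SY_A, \SY_B$ agreeing with $\SY_0$ at $*$. Under the pinch--map definition of the product in $\pi_k$, the class $A \cdot B$ inherits the continuous certificate obtained by gluing $\SY_A$ and $\SY_B$ along their common value $\SY_0$ at $*$. Since looseness is an orbit-by-orbit condition on the development map (Definition \ref{def:loose}), $A \cdot B$ is itself loose. Thus $\phi([\alpha]\cdot[\beta])$ and $[A \cdot B] = \phi([\alpha]) \cdot \phi([\beta])$ are both loose representatives of $[\alpha]\cdot[\beta]$, so they coincide in $\pi_k(\Engel(M))$ by the well-definedness above.

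Finally, $\mathrm{im}(\phi) \subseteq \Loose_k(M)$ holds by construction, while any $[\SD] \in \Loose_k(M)$ admits some loose representative $\SD'$, for which $\phi(\pi_k(i)[\SD']) = [\SD']$ again by well-definedness; this places $[\SD] = [\SD']$ inside $\mathrm{im}(\phi)$. Hence $\Loose_k(M) = \mathrm{im}(\phi)$ is a subgroup of $\pi_k(\Engel(M))$, and $\pi_k(i)|_{\Loose_k(M)}$ is a group isomorphism with inverse $\phi$, as claimed.
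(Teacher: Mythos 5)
Your argument is correct and matches what the paper intends: the corollary is stated as an immediate consequence of Theorems \ref{thm:main1} and \ref{thm:main2}, with the preceding paragraph observing that, unlike the contact case, the certificate causes no obstruction because a line field contained in $\SD$ and transverse to $\SW$ is determined up to homotopy by the formal data, so the ``hard step'' you isolate (coordinating certificates under the pinch product) is in fact automatic. The rest of your write-up --- surjectivity from Theorem \ref{thm:main1}, injectivity and well-definedness of the section from Theorem \ref{thm:main2}, and closure of $\Loose_k(M)$ under concatenation because looseness is a pointwise condition in the parameter --- is exactly the intended argument.
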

Similarly, in \cite{PV} it is shown that overtwisted Engel families yield subgroups $\OT_k(M) \subset \pi_k(\Engel(M))$ in the range $0 \leq k \leq 3$ (where the basepoint is instead taken to be overtwisted). We may then speak of tight classes: those that may not be represented by neither loose nor overtwisted families. We do not know whether tight classes actually exist or whether loose and overtwisted classes might actually coincide in some cases (after conjugating).

\end{document}